\newcommand{\R}{\mathbb{R}}
\newcommand{\so}{\Rightarrow}
\newcommand{\e}{\varepsilon}
\newcommand{\s}{\sigma}
\newcommand{\inv}{^{-1}}
\newcommand{\fe}{\varphi}
\newcommand{\del}{\partial}
\newtheorem{thm}{Theorem}[section]
\newtheorem{lem}[thm]{Lemma}
\newtheorem{prop}[thm]{Proposition}
\newtheorem{coro}[thm]{Corollary}
\theoremstyle{definition}
\newtheorem*{teon}{Theorem}
\newtheorem*{corn}{Corollary}
\newtheorem{dfn}{Definition}
\newtheorem{rem}{Remark}
\title{Bi-contact structures with symmetry: local normal forms}
\author[Connor Jackman]{Connor Jackman}
\address[Jackman]{Heidelberg, Germany}
\email{cjackman@mathi.uni-heidelberg.de}
\begin{document}
\date{\today}
\maketitle
\begin{abstract}
 A pair of transverse contact distributions on a 3-manifold will in general admit no 1-parameter families of symmetries: a flow preserving both contact distributions. Here, we will determine local normal forms for such pairs admitting symmetries. In particular, we observe that orientable Anosov flows may be globally given by the intersection of a pair of oppositely oriented contact distributions admitting, around any point, maximal local symmetries.
\end{abstract}

\section{Introduction}

Pairs of transverse contact distributions on a 3-manifold have been an active topic of interest primarily due to their relations to {\em projective Anosov flows} (see \S 2.2 of \cite{ThurstEl}, and \S 4 of \cite{Mits}), and in particular Anosov flows (eg \cite{Hoz}, \cite{Mass}). Here we examine such pairs admitting infinitesimal symmetries, where:
\begin{dfn}
{\em A {\em contact distribution}, $\xi$, on a 3-manifold $M$ is a non-integrable rank two distribution on $M$. We write $(M, \xi, \tilde \xi)$ for a pair of transverse contact distributions $\xi, \tilde \xi$ on the 3-manifold $M$. A {\em symmetry} of the pair is a diffeomorphism $\varphi:M\to M$ preserving both contact distributions:
    \[ \varphi_*\xi = \xi , ~~~\varphi_*\tilde \xi  = \tilde \xi.\]
    A {\em infinitesimal symmetry} of the pair is a vector field whose flow is by symmetries of the pair.}
\end{dfn}

\begin{dfn}
    {\em We call the {\em local symmetry algebra}, $\mathfrak{g}$, of $(M, \xi, \tilde\xi)$ around a point $m\in M$, the Lie algebra of infinitesimal symmetries of the pair defined in some neighborhood of $m\in M$.}
    \end{dfn}
    
Note that the local symmetry algebra around a point is a real vector space, having a Lie algebra structure by Lie bracket of vector fields.

As is well known, a {\em single} contact distribution on a 3-manifold admits many infinitesimal symmetries: all are locally equivalent to the standard Darboux normal form: $\ker\{dy - pdx\}, ~(x,y,p)\in \R^3$, and any function $H(x,y,p)$ generates an infinitesimal symmetry via its contact Hamiltonian vector field, eg:
\begin{equation}\label{eq:ContHamVf}
    H_p\del_x + (p H_p - H  )\del_y - (H_x + p H_y)\del_p 
\end{equation} 
where here, and in what follows, we write subscripts for partial derivatives ($H_p = \del_p H$, etc.).

On the other hand, as it turns out (see \cite{Wilk}), a typical {\em pair} of transverse contact distributions will admit {\em no} infinitesimal symmetries. As for those pairs of transverse contact distributions admitting infinitesimal symmetries, we find here the following local normal forms:

\begin{thm}\label{thm:lnf} Suppose a pair of transverse contact distributions $\xi, \tilde \xi$ on a 3-manifold $M$ admits an infinitesimal symmetry around $m\in M$ transverse to $\xi\cap\tilde \xi$. Then around $m$, the pair is given in some coordinates by one of the following local normal forms:

\begin{enumerate}
    \item[(I$_1$)] For $c\in \R\backslash \{ 0,1\}$ a constant: \[ \xi = \{ dy = pdx\}, ~~~\tilde \xi = \{ dy = c p dx\}.\]
    The symmetry algebra of such a pair is given by the vector fields of the form:
    \[\mathfrak{g} = \langle u(x) \del_x + v(y)\del_y +  p(v'(y) - u'(x))\del_p\rangle \]
    where $u(x), v(y)$ are any smooth functions of $x$ and $y$ respectively, and $u', v'$ their derivatives.

     \item[(I$_2$)] For $c^2\le 4$ a constant,
      \[ \xi = \{ dy = pdx\}, ~~~\tilde \xi = \{ (p+c)dy + dx = 0\}.\]
      The symmetry algebra of such a pair in these coordinates are those vector fields of the form: 
       \[\mathfrak{g} = \langle u\del_x + v\del_y - u_y(1 + cp + p^2)\del_p \rangle \]
      where $u(x,y)$ is any solution to
      \[ u_{xx} + u_{yy} = c u_{xy} \]
      and $v(x,y)$ its conjugate solution determined, up to addition of a constant, through:
      \[ dv = -u_y dx + (u_x - c u_y)dy. \]
    
     \item[(II$_1$)] For $f(y)$ a function:
     \[ \xi = \{ dy = pdx\} , ~~\tilde \xi = \{ (p +f(y)) dy \pm dx = 0 \}, ~~~\mathfrak{g} = \langle \del_x\rangle. \]

    \item[(II$_2$)] For $f(y)$ a function:
    \[ \xi = \{ dy = pdx\}, ~~~\tilde \xi = \{ dy = f(y) pdx\}, ~~~\mathfrak{g} = \langle u(x)\del_x - pu'(x)\del_p \rangle. \]
In the symmetry algebra, $u(x)$ is any smooth function of $x$ and $u'$ its derivative.

    \item[(III$_1$)] For functions $a(y), b(y), f(p)$: 
    \[ \xi = \{  dy = \frac{p + a(y)}{p + b(y)} dx \} , ~~\tilde \xi = \{ dy = \frac{f(p) + a(y)}{f(p) + b(y)} dx \}, ~~~\mathfrak{g} = \langle \del_x \rangle. \]

    \item[(III$_2$)] For a function $f(p)$:
    \[ \xi = \{ dy = p dx\}, ~~~\tilde \xi = \{ dy =  f(p)dx\}, ~~~\mathfrak{g} =  \langle \del_x, \del_y, x\del_x + y\del_y\rangle. \]

    \item[(IV)] For a function $f(y,p)$:
    \[ \xi = \{ dy = pdx\}, ~~\tilde \xi = \{ dy  = f(y,p)dx\}, ~~~\mathfrak{g} = \langle \del_x\rangle. \]
    
  \end{enumerate}
\end{thm}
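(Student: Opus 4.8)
The plan is to reduce everything to an explicit Darboux model for $\xi$, use the contact-Hamiltonian description of the symmetries of a single contact distribution, and then recover $\tilde\xi$ from a single determining equation. First I would invoke Darboux's theorem to write $\xi=\ker\{dy-p\,dx\}$ in coordinates $(x,y,p)$ near $m$. Since the given field $X$ preserves $\xi$ it is a contact symmetry, hence equal to the contact Hamiltonian field \eqref{eq:ContHamVf} of some function $H(x,y,p)$; note that $X$ is nonvanishing near $m$, being transverse to the line field $L:=\xi\cap\tilde\xi$. The transverse plane field is $\tilde\xi=\ker\tilde\omega$ for a $1$-form $\tilde\omega$ pointwise independent of $dy-p\,dx$, and its non-integrability (contact) condition together with transversality to $\xi$ constrains $\tilde\omega$ to one or two free functions, the remaining freedom recording where $L$ sits inside $\xi$. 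The symmetry requirement for $\tilde\xi$ is then the single closed condition $\mathcal{L}_X\tilde\omega\wedge\tilde\omega=0$.

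The key normalization is to simplify $X$ itself. Reading off the values of $\omega(X)$ and $\tilde\omega(X)$ at $m$ and using $X\notin L$, there are only two essentially distinct positions for $X$: it is transverse to both planes at $m$, or (after interchanging the symmetric roles of $\xi$ and $\tilde\xi$) it lies in exactly one of them. In the first case I would use the residual contactomorphisms preserving the Darboux form of $\xi$ to straighten $X$ into a coordinate or scaling field, so that $\tilde\omega$ may be taken independent of the corresponding flow parameter; in the second I would rectify $X$ inside $\xi$ in the same way. In every branch this collapses $\mathcal{L}_X\tilde\omega\wedge\tilde\omega=0$ into an overdetermined but explicitly integrable first-order system for the one or two functions describing $\tilde\xi$, and it is here that one is free to pass to whichever adapted coordinates render $\tilde\xi$ simplest (this is why, e.g., model (III$_1$) is most naturally written with $\xi$ itself in a non-Darboux presentation).

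Integrating that system yields the enumerated forms, the discrete branching being dictated by genuine invariants of the configuration. The deepest split is an eigenvalue dichotomy: the symmetry induces a linear (Möbius) action on the projectivized slopes of lines in $\xi$, and whether its fixed points are real and distinct or a complex-conjugate (or coincident) pair separates the two maximally symmetric models, giving (I$_1$) versus (I$_2$). This is exactly where the parameter restrictions arise: $c\in\R\setminus\{0,1\}$ in (I$_1$) encodes transversality ($c\ne1$, else $\xi=\tilde\xi$) and non-integrability of $\tilde\xi$ ($c\ne0$), while $c^2\le 4$ in (I$_2$) is the elliptic/parabolic range of the symbol of $u_{xx}+u_{yy}=c\,u_{xy}$, i.e.\ the non-real fixed-point case. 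Constancy versus honest functional dependence of the remaining invariants then distinguishes the large-symmetry models from the rigid ones (II$_1$), (III$_1$), (IV) with $\mathfrak g=\langle\del_x\rangle$, and a sign normalization accounts for the $\pm$ appearing in (II$_1$).

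Finally, for each model I would determine the entire symmetry algebra, not merely the one field $X$, by solving the linearized determining equations for a general infinitesimal symmetry. These reduce to the decoupled transport conditions $u=u(x)$, $v=v(y)$ in (I$_1$); to the second-order equation $u_{xx}+u_{yy}=c\,u_{xy}$ together with its conjugate potential $v$ in (I$_2$); to the single condition $u=u(x)$ in (II$_2$); and to the three-dimensional solution space spanned by $\del_x,\del_y,x\del_x+y\del_y$ in (III$_2$), reproducing the stated algebras (with the $\del_p$-components forced by the requirement $\mathcal{L}_X\omega\parallel\omega$ and $\mathcal{L}_X\tilde\omega\parallel\tilde\omega$). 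I expect the main obstacle to be the completeness of the case analysis: verifying that transversality of $X$ to $L$ admits no branches beyond the seven listed, and carefully treating the degenerate parameter values where otherwise distinct normal forms threaten to coincide or to lose the contact/transversality hypotheses.
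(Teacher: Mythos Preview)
Your initial reduction---put $\xi$ in Darboux form and rectify the symmetry to $\partial_x$, forcing $\tilde\xi=\{dy=f(y,p)\,dx\}$---is exactly the content of the paper's Proposition~4.1 and yields item~(IV) directly. Where your proposal and the paper diverge is everything after that, and that is where the substance lies.

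The paper does \emph{not} proceed by integrating the determining system $\mathcal{L}_X\tilde\omega\wedge\tilde\omega=0$ case by case. Instead it builds, independently of any symmetry, a hierarchy of scalar invariants of the pair $(\xi,\tilde\xi)$: the Schwartzian $\mathcal{S}$ along the axis, the generating invariant $I$ (defined via balanced contact forms $\alpha,\tilde\alpha$ with $\alpha\wedge d\alpha=\pm\tilde\alpha\wedge d\tilde\alpha$), and then higher invariants $J,K,H,G,\Lambda,m,n,\dots$ produced by Cartan's equivalence method from structure equations of adapted coframes. The case split in Table~1 is organized by these invariants ($\mathcal{S}\equiv 0$ vs.\ not; $dI=0$ vs.\ not; $dI\wedge dI'=0$ vs.\ not; $\Lambda=0$ vs.\ not), and the normal forms are obtained not by rectifying $X$ further but by integrating the structure equations using solutions $\rho$ of the Riccati equation $\rho'=1+I\rho\pm\rho^2$ (which singles out the integrable rank-two distributions containing the axis). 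The I$_1$/I$_2$ split is the hyperbolic vs.\ elliptic/parabolic dichotomy for this Riccati equation, governed by the sign $\pm$ and by $I^2\lessgtr 4$---not, as you suggest, by a M\"obius action of the given symmetry.

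The gap in your plan is therefore the following: once you have $f(y,p)$, you need a mechanism to decide \emph{when the symmetry algebra is strictly larger than $\langle\partial_x\rangle$} and to bring $f$ to the listed special shapes. You allude to ``constancy versus honest functional dependence of the remaining invariants,'' but you never say what those invariants are or how to compute them; the paper's answer is precisely the Cartan coframe machinery above, and several of the reductions (notably Proposition~4.6 for $I=\mathrm{const}$ in the elliptic case, and the III$_1$ normal form via the invariants $m,n$ and a delicate argument with two distinct Riccati solutions) require nontrivial PDE/ODE arguments that do not fall out of ``solve $\mathcal{L}_X\tilde\omega\wedge\tilde\omega=0$.'' Without an explicit substitute for this invariant hierarchy, your case analysis cannot be shown complete, and in particular you have no tool to certify that (II$_1$), (III$_1$), (IV) genuinely have $\mathfrak g=\langle\partial_x\rangle$ rather than something larger.
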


\begin{rem} {\em These normal forms are determined up to re-labelings of the contact structures, and the open set containing $m\in M$ is identified in each case with an appropriate open set $U\subset \R^3$ (namely one where $\xi, \tilde\xi$ are transverse contact distributions in these coordinates). For example in case I$_1$, a neighborhood of $m\in M$ is identified with an open subset $U\subset \{ p\ne 0\} \subset\R^3$, or for I$_2$, with $U$ an open subset of the origin $x = y = p = 0$. In the functionally dependent families II - IV, the functions are not arbitrary but subject to certain restrictions in order to define transverse contact structures on some open set. Explicitly, for II$_1$ we may take a neighborhood $U$ of the origin in $\R^3$, for II$_2$: we can take $U$ some neighborhood of $x = y = 0, p = 1$ for $f(y)\ne 0,1$ for $y$ near 0. For types III we may take $U$ as a neighborhood of the origin, where we have $a(y)\ne b(y)$ for $y$ near zero and $f(p)\ne p$ having no fixed points for $p$ near zero and $f'(p)\ne 0$ for $p$ near zero. For type IV, we may take some neighborhood of $x = y = 0, p = 1$ where we have $f(y,p)\ne p$ and $f_p(y,p)\ne 0$ around $y = 0, p = 1$.}
\end{rem}

\begin{rem}[On the transversal condition to the axis.]
{\em For $(M, \xi, \tilde \xi)$ admitting a non-trivial (non-vanishing) infinitesimal symmetry $Y$, the locus where $Y \in \xi\cap \tilde \xi$ is tangent to this `axis', $\xi\cap \tilde\xi$, is of codimension at least one (and in general of codimension two, locally given by an isolated integral curve of this axis) being the intersection of the hypersurfaces $\{ Y\in \xi\} \cap \{ Y\in \tilde \xi\}$ (as $Y\ne 0$ these are locally hypersurfaces as follows from \eqref{eq:ContHamVf}). It is possible, in certain cases, that the hypersurfaces $\{Y\in \xi\}$ and $\{Y\in \tilde\xi\}$ coincide, for example: $\xi = \{ dy = pdx \}, ~\tilde\xi = \{ dy = \pm pdx + dp \}$ and $Y = \del_x$. Here we will be focused on normal forms around `regular' points: where such a symmetry field is transverse to this axis $\xi \cap \tilde \xi$. It would be natural in further work to examine further local normal forms around such `singular' points, for example: where a pair admits a symmetry tangent to $\xi\cap\tilde\xi$ at said point, or around a point at which a symmetry field vanishes.}
\end{rem}

\begin{rem}
{\em We would also like to note that our choice of coordinates in which we present the local normal forms above is somewhat arbitrary: only determined up to arbitrary diffeomorphisms. The forms presented in the above theorem \ref{thm:lnf} are merely those which appeared to us to have the `simplest' defining formulas.}
\end{rem}

The items in theorem \ref{thm:lnf} can be distinguished by considering certain local invariants associated to a pair $(M, \xi, \tilde\xi)$ in order to be certain the items I--IV can be realized by truly {\em distinct} pairs (not equivalent under any local diffeomorphism). These local invariants can be determined using Cartan's equivalence method, as carried out in \cite{Wilk}. See table \ref{table:Summary} below, for a summary of these conditions (our labelling into the types I -- IV being motivated by the different types of invariants involved in this table), as well as remark \ref{rem:DistBasic} below for some of their simpler distinguishing properties.

The paper is structured as follows: in \S \ref{sec:Invs} we will define some basic invariants of a pair $(M,\xi, \tilde \xi)$ of transverse contact structures, and in \S \ref{sec:Comps} carry out the relevant computations to establish the normal forms stated in the main theorem \ref{thm:lnf}, introducing the additional invariants which may be used to distinguish them along the way. In \S \ref{sec:Exs} we will give a pair of examples in which the normal forms of items I$_1$ (with $c = -1$) and I$_2$ (with $c = 2$) are realized: relating to Anosov flows and integrable contact flows respectively.

\section{Invariants}\label{sec:Invs}

We define some basic invariants of a pair of transverse contact structures. First:

\begin{dfn}
    {\em For $(M, \xi, \tilde \xi)$ we call the line field
    \[ A = \xi\cap \tilde \xi \]
    on $M$ the {\em axis} of the pair of transverse contact structures.}
\end{dfn}

\begin{rem}
     {\em We regard $A\to M$ as a line bundle over $M$, with dual line bundle $A^*\to M$, and in particular a line bundle of quadratic forms on $A$ we denote: $(A^*)^2 = A^*\otimes A^*\to M$.}
\end{rem}

Recall that the integral curves of a Legendrian foliation of a given contact structure (a foliation by curves tangent to the given contact structure) inherit a projective structure: see figure \ref{fig:ProjStr} below, or eg \S 4.2.1 of \cite{DSIV}, where:

\begin{dfn}
    {\em A projective structure on an $n$-dimensional manifold $N$ is an atlas, $\fe_a : U_a \to V_a \subset \mathbb{RP}^n$, whose transition functions are by projective transformations.}
\end{dfn}

Since the integral curves of the axis are a Legendrian foliation with respect to two contact structures, they inherit a pair of (in general distinct) projective structures. Let us recall  (see for example \cite{TabOvSch}):

\begin{dfn}
   {\em  For $f:\mathbb{RP}^1\to \mathbb{RP}^1$ a local diffeomeorphism at $x\in \mathbb{RP}^1$, its {\em Schwartzian derivative} $S_x(f)$ at $x\in \mathbb{RP}^1$ is the quadratic form on $T_x(\mathbb{RP}^1)$ given via the cross-ratio expansion: 
    \[ [f(x), f(x_1); f(x_2), f(x_3)] = [x, x_1;x_2, x_3] + \e^2 S_x(f)(v) + O(\e^3),\] where $v \in T_x(\mathbb{RP}^1)$ is extended to a vector field around $x$ with flow $\fe_t$ and $x_j = \fe_{j\e}(x)$. Its formula in an affine chart: $(1:p) \mapsto (1: f(p))$ is:
    \[ S(f) = \left( \left( \frac{f''}{f'}\right)' - \frac12 \left( \frac{f''}{f'}\right)^2 \right) (dp)^2. \]}
\end{dfn}

Note that $S(f)\equiv 0$ exactly when $f$ is a projective transformation (in an affine chart: $f(p) = \frac{ap + b}{cp + d}$).

\begin{figure}[h]
\begin{tikzpicture}

\draw (-1,0) -- (3,0);
\draw (-2,-1) -- (2,-1);
\draw (-2,-1) -- (-1,0);
\draw (2,-1) -- (3,0);

\draw (2.5,.5) -- (2.5,2);
\draw (.5,.5) -- (.5,2);

\draw (1.5,.1) -- (1.5,1.6);
\draw (-.5,.1) -- (-.5,1.6);

\node at (1.5,2) {$\mathcal{A}$};

\draw[blue, very thick] (1.2,1) -- (1.2,.3);
\draw[blue, very thick] (1.8,1.3) -- (1.8,.6);

\draw[blue, very thick] (1.2,1) -- (1.8,1.3);
\draw[blue, very thick] (1.2,.3) -- (1.5,.45);

\draw[blue, very thick] (1.7,.55) -- (1.8,.6);

\node[blue] at (2.15,1) {$\xi_m$};

\draw[red] (1.3,1.3) -- (1.3,1.1);

\draw[red] (1.7,1.1) -- (1.7,.35);

\draw[red] (1.3,1.3) -- (1.7,1.1);

\draw[red] (1.5,.45) -- (1.7,.35);

\node[red] at (.95,1.4) {$\tilde \xi_m$};

\filldraw [black] (1.5,.8) circle (2pt);

\draw[blue, very thick] (1.2,-.6) -- (1.8,-.3);

\draw[red] (1.2, -.3) -- (1.8, -.6);  

\filldraw [black] (1.5,-.45) circle (2pt);
\node at (.8,-.4) {$\pi(m)$};


\node at (4.5,1.5) {$M$};

\draw[->](4.5,1) -- (4.5,0);
\node at (4.7,.5) {$\pi$};

\node at (4.5,-.5) {$M/A$};


\end{tikzpicture}

\caption{Locally, integral curves $\mathcal{A}$ of a Legendrian foliation on $(M,\xi)$ are canonically  identified with $\mathbb{RP}^1$ (up to projective transformations). For $\Sigma \cong_{loc} M/A$, a local slice, send $m\in \mathcal{A}$ to $\pi_*(\xi_m) \in \mathbb{P}(T_{\pi(m)}\Sigma)$ (by the contact condition, a local diffeomorphism).}
\label{fig:ProjStr}
\end{figure}
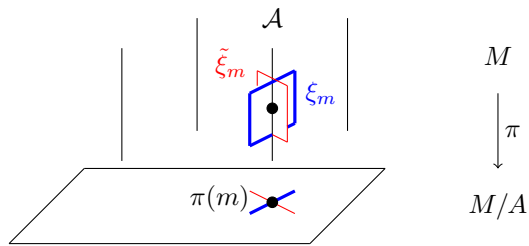

\begin{prop}\label{prop:Schwartz}
    A pair of transverse contact structures, $\xi, \tilde \xi$, with axis $A$ determines a section:
    \[ \mathcal{S}: M \to (A^*)^2 \]
    given, at each point $m\in M$, as the Schwartzian derivative at $m$ between the induced projective structures on the integral curve of the axis through $m$. 
    We call this section the {\em Schwartzian invariant} of $(M,\xi, \tilde \xi)$.
\end{prop}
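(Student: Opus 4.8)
The plan is to show that the pointwise recipe of the statement produces a genuine, smooth section, the real content being independence of the auxiliary choices entering the two projective structures. First I would make precise the projective structure attached to a single contact distribution along a Legendrian leaf, following Figure \ref{fig:ProjStr}. Fixing a local slice $\Sigma$ of the foliation by integral curves of $A$, with projection $\pi:M\to\Sigma$, each leaf $\mathcal{A}$ is a fiber $\pi^{-1}(s)$, so that $\pi(m)=s$ is constant along $\mathcal{A}$ and the \emph{developing map}
\[ D_\Sigma:\mathcal{A}\to \mathbb{P}(T_s\Sigma)\cong\mathbb{RP}^1,\qquad m\mapsto \pi_*(\xi_m), \]
is a local diffeomorphism: it is an immersion precisely because $\xi$ is non-integrable, since the contact line $\pi_*(\xi_m)$ must turn with nonzero angular velocity as $m$ moves along the Legendrian leaf (were it stationary to first order, $\xi$ would be integrable along $\mathcal{A}$).

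Second, I would establish independence of the slice. Two slices $\Sigma,\Sigma'$ are related by the holonomy diffeomorphism $h$ obtained by sliding along leaves, and near $\mathcal{A}$ one has $\pi'=h\circ\pi$, hence $\pi'_*=h_*\circ\pi_*$ and therefore $D_{\Sigma'}=\mathbb{P}(h_*)\circ D_\Sigma$, with $\mathbb{P}(h_*)$ a fixed projective transformation (the projectivization of the linear isomorphism $h_*:T_s\Sigma\to T_{s'}\Sigma'$). Thus $D_\Sigma$ and $D_{\Sigma'}$ determine one and the same projective structure on $\mathcal{A}$. Carrying this out for $\xi$ and for $\tilde\xi$ separately endows each leaf with two developing maps $D,\tilde D$ to $\mathbb{RP}^1$, each canonical up to post-composition with a projective transformation.

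Third, I would compare them via the Schwartzian. The transition $\tilde D\circ D^{-1}$ is a local diffeomorphism of $\mathbb{RP}^1$, and I would set $\mathcal{S}$ along $\mathcal{A}$ to be the pullback $D^*\,S(\tilde D\circ D^{-1})$; evaluated at $m$ this is a quadratic form on $T_m\mathcal{A}=A_m$, i.e.\ an element of $(A_m^*)^2$, exactly as claimed. Well-definedness is where the cocycle identity $S(f\circ g)=(S(f)\circ g)(g')^2+S(g)$ enters: replacing $D$ by $P\circ D$ and $\tilde D$ by $Q\circ\tilde D$ for projective $P,Q$ changes the transition to $Q\circ(\tilde D\circ D^{-1})\circ P^{-1}$, and since $S(P)=S(Q)\equiv 0$ one computes $D^*S$ to be unchanged after pullback by the new $D$. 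Thus the quadratic differential on each leaf does not see the $\mathrm{PGL}_2$ ambiguity in either developing map.

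The hard part will be the bookkeeping of this last step together with smoothness transverse to the leaves: confirming that $\tilde D\circ D^{-1}$ genuinely descends to a choice-free quadratic differential on each $\mathcal{A}$, and that the resulting pointwise forms fit together smoothly as $m$ varies. The conceptual content — that the Schwartzian of the comparison is the canonical "difference" of two projective structures, insensitive to the ambiguity in each — is supplied precisely by the cocycle property and by the fact that $S(f)\equiv 0$ iff $f$ is projective; the remaining smoothness in $m$ can be verified directly by writing $D$ in Darboux coordinates for $\xi$, in which $\xi\cap\tilde\xi$ and hence the developing maps are given by explicit smooth formulas.
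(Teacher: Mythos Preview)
Your proposal is correct and follows essentially the same approach as the paper: define developing maps from the axis leaf into a projective line (you via projection to a transversal slice $\Sigma$, the paper via the flow of a spanning vector field $X$ of $A$ into $\mathbb{P}(T_mM/A_m)$), then pull back the Schwartzian of the transition $\tilde D\circ D^{-1}$. The two constructions are canonically identified since $\pi\circ\fe_t=\pi$ forces $\pi_*\xi_a=\pi_*(\fe_{-t,*}\xi_a)$, and your explicit invocation of the cocycle identity for the Schwartzian is just a more detailed version of the well-definedness check the paper leaves implicit.
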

\begin{proof}
    Explicitly, this invariant is defined as follows: let $\mathcal{A}_m$ be the integral curve of $A$ through $m$, and consider the maps $\delta_m, \tilde\delta_m : \mathcal{A}_m \to \mathbb{P}(T_mM/A_m)$ sending $a\in \mathcal{A}_m$ to $\fe_{-t,*}\xi_a$ (resp.~$\fe_{-t,*}\tilde \xi_a$) where $a = \fe_t(m)$ for $\fe_t$ the flow of some vector field $X$ spanning $A$. By the contact condition on $\xi, \tilde\xi$ these maps are local diffeomorphisms. Then pull back by $\delta_m$ the Schwartzian derivative of $\tilde\delta_m\circ\delta_m\inv:\mathbb{P}(T_mM/A_m) \to \mathbb{P}(T_mM/A_m)$ at $\delta_m(m)$ to have a quadratic  form $\mathcal{S}_m$ on $A_m = T_m\mathcal{A}_m$.
\end{proof}

\begin{coro}\label{rem:Schwartz}
    The Schwartzian invariant vanishes, $\mathcal{S}\equiv 0$, iff the integral curves of the axes inherit a {\em common} projective structure from the pair of contact distributions.
    In local coordinates $(x,y,p)$ for which $\xi = \{ dy = pdx\}, ~~\tilde \xi = \{ dy = f(x,y,p)dx\}$,
    we have:
    \[ \mathcal{S} = \left( \del_p\left(\frac{f_{pp}}{f_p}\right) - \frac12 \left(\frac{f_{pp}}{f_p}\right)^2  \right) (dp)^2.\]
     The vanishing of $\mathcal{S}$ being equivalent to $f(x,y,p)$ having the form: $f = \frac{a(x,y)p + b(x,y)}{c(x,y)p + d(x,y)}$.
\end{coro}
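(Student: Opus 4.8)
The plan is to work directly in the coordinates $(x,y,p)$ where $\xi = \{dy = pdx\}$ and $\tilde\xi = \{dy = f(x,y,p)dx\}$, and to make the construction of Proposition \ref{prop:Schwartz} explicit. First I would identify the axis: since $\del_p$ annihilates both $dy - pdx$ and $dy - fdx$, the common direction is $A = \langle \del_p\rangle$, so (away from $\xi = \tilde\xi$, i.e.\ where $f \ne p$) the integral curves $\mathcal{A}_m$ of the axis are the lines $\{x = x_0, y = y_0\}$ parametrized by $p$, and $\del_p$ spans $A$ with translation flow $\fe_t(x,y,p) = (x,y,p+t)$.

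Next I would make the two projective charts $\delta_m, \tilde\delta_m$ concrete. Taking the slice $\Sigma \cong M/A$ with coordinates $(x,y)$ and projection $\pi(x,y,p) = (x,y)$, the plane $\xi$ is spanned by $\del_x + p\del_y$ and $\del_p$, so $\pi_*\xi = \langle \del_x + p\del_y\rangle$ has slope $p$ in $\mathbb{P}(T_{(x,y)}\Sigma)$; likewise $\pi_*\tilde\xi = \langle \del_x + f\del_y\rangle$ has slope $f(x,y,p)$. Because $\fe_{-t,*}$ is the identity on coordinate vector fields, flowing back to $m$ changes nothing, and in the affine (slope) chart on $\mathbb{P}(T_mM/A_m)$ the charts become $\delta_m : p \mapsto p$ and $\tilde\delta_m : p \mapsto f(x,y,p)$ along the fixed curve. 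The transition map $\tilde\delta_m \circ \delta_m\inv$ is therefore just $p \mapsto f(x,y,p)$ with $x,y$ held fixed, and the affine-chart formula for the Schwartzian derivative (with $' = \del_p$) yields exactly
\[ \mathcal{S} = \left( \del_p\left(\frac{f_{pp}}{f_p}\right) - \frac12\left(\frac{f_{pp}}{f_p}\right)^2\right)(dp)^2. \]

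Finally I would read off the two assertions. Since the Schwartzian derivative of a one–dimensional map vanishes identically precisely when the map is a projective transformation (as noted after its definition), $\mathcal{S} \equiv 0$ means that for each fixed $(x,y)$ the transition map $p \mapsto f(x,y,p)$ is Möbius, hence $f = \frac{a(x,y)p + b(x,y)}{c(x,y)p + d(x,y)}$ with coefficients depending on $(x,y)$; this is the claimed form. The same computation gives the first statement coordinate–freely: $\mathcal{S}_m$ is by construction the Schwartzian of the transition between the $\xi$– and $\tilde\xi$–projective charts at $\delta_m(m)$, and $\mathcal{S}$ vanishing at every point of an axis curve forces this transition to have identically zero Schwartzian, i.e.\ to be a single projective transformation, which is exactly the statement that the two induced projective structures on the curve coincide; conversely, coinciding structures give a projective transition and hence $\mathcal{S} \equiv 0$.

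The main obstacle I expect is the bookkeeping in the second step: verifying that the flow-and-pushforward prescription of Proposition \ref{prop:Schwartz} genuinely collapses to ``$f$ as a function of $p$'' in these coordinates, and in particular that the $\xi$–chart is the standard affine coordinate $p$ itself, so that no affine reparametrization enters and the formula appears without correction terms. One should also check that the quadratic form is taken on $A_m = T_m\mathcal{A}_m$ via $\delta_m$, so that the factor $dp$ in the formula is the correct cotangent weight; this is where the invariance of the Schwartzian as a quadratic form (rather than a mere number) is used, but it causes no trouble once the charts are identified as above.
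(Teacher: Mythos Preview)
Your proposal is correct and is exactly the computation the paper leaves implicit: the corollary has no separate proof in the paper, since it follows by unwinding the construction of Proposition~\ref{prop:Schwartz} in the Darboux coordinates $(x,y,p)$, which is precisely what you do. Your identification of $\delta_m$ with the affine chart $p\mapsto p$ and of $\tilde\delta_m\circ\delta_m^{-1}$ with $p\mapsto f(x,y,p)$ is the essential point, and the rest is the standard characterization of M\"obius maps by vanishing Schwartzian.
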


To define further invariants, we will make some orientation conventions. Recall that a contact structure $\xi$ on the 3-manifold $M$ induces an orientation on $M$ given locally by $\alpha\wedge d\alpha$ for some (local) choice of contact 1-form with $\xi = \ker\alpha$. Since $\dim M = 3$ (or more generally for a contact structure on $M$ with $\dim M = 4k-1$) this (local) orientation is independent of arbitrary scalings of $\alpha$, and so defines a global orientation on $M$ induced by $\xi$.
\begin{dfn}
   {\em  We call a pair $(M, \xi, \tilde\xi)$ {\em commonly (positively) oriented} (resp.~{\em oppositely (negatively) oriented}) when the orientations induced by the contact distributions coincide (resp.~are opposite). We denote such pairs by:
\[ (M, \xi, \tilde \xi)_+, ~~~(M, \xi, \tilde \xi)_-\]
for common orientations (`$+$') and opposite orientations (`$-$').}
\end{dfn}
Note that an isolated contact structure $(M,\xi)$ can only have a sign relative to some choice of orientation on $M$.
The case of an oppositely oriented pair, $(M, \xi, \tilde\xi)_-$, usually called a {\em bi-contact structure}, is the relevant case for projective Anosov flows, where the integral curves of the axis are the integral curves of the projective Anosov flow. Following \cite{Mass} \S 2:

\begin{dfn}\label{def:BalGenInv}
{\em We call a pair $(\alpha, \tilde\alpha)$ of contact forms $\ker \alpha = \xi, \ker\tilde\alpha = \tilde \xi$ for $(M, \xi, \tilde \xi)_\pm$ {\em balanced} when:
    \[ \alpha\wedge d\alpha = \pm \tilde\alpha\wedge d\tilde\alpha. \]
    Supposing the axis has been oriented by $A = \text{span}_+\{ X\}$, we take the {\em normalized vector field} $X:M\to A$ spanning the axis through:
    \[ i_X(\alpha\wedge d\alpha) = \alpha\wedge\tilde\alpha \]
    for $\alpha, \tilde\alpha$ a choice of balanced contact forms. The corresponding one form along the axis we denote as, $ds : M\to A^*$,
    with $ds(X) = 1$. For a function $f:M\to \R$, we denote its {\em derivative along the axis} by:
    \[ f' := Xf :M\to \R.\]
    As well, we have an induced function, which we call the  {\em generating invariant}:
    \[ I:M\to \R, ~~~I := \frac{\alpha\wedge d\tilde\alpha + \tilde\alpha \wedge d\alpha}{\alpha\wedge d\alpha} \]
    where $\alpha, \tilde\alpha$ are balanced contact forms such that $i_X(\alpha\wedge d\alpha) = \alpha\wedge\tilde\alpha$.}
\end{dfn}

\begin{rem}
    {\em The conditions of being balanced as well as $i_X(\alpha\wedge d\alpha) = \alpha\wedge \tilde\alpha$ determine the contact forms $\alpha, \tilde\alpha$ up to simultaneous scalings: $\alpha, \tilde\alpha \to \lambda\alpha, \lambda\tilde\alpha$ for some function $\lambda \ne 0$. The 1-form $ds$ along the axis, and function $I$, are independent of such simultaneous scalings (only their sign depending upon choice of axis orientation).}
\end{rem}

\begin{rem}
    {\em The combination $\alpha\wedge d\tilde\alpha + \tilde\alpha\wedge d\alpha$ used here to define the generating invariant $I$, plays a main role already in \cite{Geig}, and \cite{ThurstEl} (see eq.~(2.2) of ch.~2).}
\end{rem}

\begin{rem}\label{rem:altForms}
{\em For any 1-form $\beta$ with $\beta|_A = ds$, and balanced contact forms $\alpha, \tilde\alpha$, we have:
\[ \alpha\wedge d\alpha = \alpha \wedge\tilde\alpha\wedge\beta, \]
since a dual frame to $\alpha, \tilde\alpha, \beta$ with $\beta|_A = ds$ is of the form $X_1, X_2, X$ for $X$ the normalized vector field along the axis. Consequently:
\[ f' = \frac{\alpha\wedge\tilde\alpha \wedge df}{\alpha\wedge d\alpha}. \] }
\end{rem}

In local coordinates, we have the following explicit formulas:

\begin{prop}[explicit formulas]\label{rem:explFormulasGen}
    For $(M, \xi, \tilde \xi)$ given in local coordinates by $\xi = \{ dy = pdx\}, \tilde \xi = \{ dy = f(x,y,p)dx\}$, (and axis oriented by $\text{span}_+\{ \del_p\}$) we have:
    \[ ds = \frac{|f_p|^{1/2}}{|p-f|} dp, ~~X = \frac{|p-f|}{|f_p|^{1/2}} \del_p, \]
    \[ I = |f_p|^{-1/2} \left( 1 + f_p + \frac12 (p-f) \frac{f_{pp}}{f_p} \right) \frac{p-f}{|p-f|}. \]
    In the special case $\mathcal{S}\equiv 0$, with $f = \frac{a(x,y) p + b(x,y)}{c(x,y)p + d(x,y)}$, then: $I = \pm \frac{a+d}{\sqrt{|ad-bc|}}$.
\end{prop}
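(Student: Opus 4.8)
The plan is to start from the natural defining 1-forms $\alpha = dy - p\,dx$ and $\alpha_0 = dy - f\,dx$ for $\xi$ and $\tilde\xi$, rescale $\alpha_0$ to a balanced companion $\tilde\alpha$, and then read off each quantity directly from Definition \ref{def:BalGenInv}. First I would record the two orientation 3-forms $\alpha\wedge d\alpha = -\,dx\wedge dy\wedge dp$ and $\alpha_0\wedge d\alpha_0 = -f_p\,dx\wedge dy\wedge dp$. Since the rescaling $\alpha_0 \mapsto \lambda\alpha_0$ multiplies $\alpha_0\wedge d\alpha_0$ by $\lambda^2$, the balanced condition $\alpha\wedge d\alpha = \pm\,\tilde\alpha\wedge d\tilde\alpha$ forces $\lambda^2 = |f_p|^{-1}$, so $\tilde\alpha = \lambda(dy-f\,dx)$ with $|\lambda| = |f_p|^{-1/2}$; simultaneously this identifies the sign $\pm$ with $\mathrm{sgn}(f_p)$, i.e.\ the pair is commonly oriented exactly when $f_p>0$.

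Next I would pin down the sign of $\lambda$ and extract $X$. Computing $\alpha\wedge\tilde\alpha = \lambda(f-p)\,dx\wedge dy$ and $i_{\del_p}(\alpha\wedge d\alpha) = -\,dx\wedge dy$, the normalization $i_X(\alpha\wedge d\alpha) = \alpha\wedge\tilde\alpha$ gives $X = \lambda(p-f)\,\del_p$. Requiring $X$ to positively span the axis orientation $\mathrm{span}_+\{\del_p\}$ forces $\mathrm{sgn}(\lambda)=\mathrm{sgn}(p-f)$, hence $\lambda = \frac{p-f}{|p-f|}\,|f_p|^{-1/2}$ and $X = \frac{|p-f|}{|f_p|^{1/2}}\,\del_p$. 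The dual relation $ds(X)=1$ then immediately yields $ds = \frac{|f_p|^{1/2}}{|p-f|}\,dp$.

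For $I$ I would expand the numerator $\alpha\wedge d\tilde\alpha + \tilde\alpha\wedge d\alpha$ using $d\alpha = dx\wedge dp$ and $d\tilde\alpha = d\lambda\wedge(dy-f\,dx) + \lambda\big(f_y\,dx\wedge dy + f_p\,dx\wedge dp\big)$. After collecting the coefficient of $dx\wedge dy\wedge dp$ and dividing by the coefficient $-1$ of $\alpha\wedge d\alpha$, one reaches the compact intermediate form $I = \lambda(1+f_p) - \lambda_p(p-f)$. The one genuinely delicate point is the differentiation $\lambda_p = \del_p\big(\epsilon\,|f_p|^{-1/2}\big)$ through the absolute value, where $\epsilon = \mathrm{sgn}(p-f)$ is locally constant away from the axis-transversality locus: using $\del_p|f_p| = \mathrm{sgn}(f_p)\,f_{pp}$ and the identity $\mathrm{sgn}(f_p)/|f_p| = 1/f_p$, the two terms collapse to the stated $I = |f_p|^{-1/2}\big(1 + f_p + \tfrac12(p-f)\tfrac{f_{pp}}{f_p}\big)\tfrac{p-f}{|p-f|}$.

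Finally, for the M\"obius special case $f = \frac{ap+b}{cp+d}$ (the $\mathcal S\equiv 0$ normal form from Corollary \ref{rem:Schwartz}), I would substitute $f_p = \frac{ad-bc}{(cp+d)^2}$, $\frac{f_{pp}}{f_p} = \frac{-2c}{cp+d}$, and $p-f = \frac{cp^2 + (d-a)p - b}{cp+d}$ into the bracket. Clearing the common denominator $(cp+d)^2$, the numerator should simplify to $(a+d)(cp+d)$, so the bracket equals $\frac{a+d}{cp+d}$; combined with $|f_p|^{-1/2} = \frac{|cp+d|}{\sqrt{|ad-bc|}}$ this gives $I = \pm\frac{a+d}{\sqrt{|ad-bc|}}$, the sign being $\mathrm{sgn}(cp+d)\,\mathrm{sgn}(p-f)$. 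The main obstacle throughout is purely bookkeeping: consistently tracking the absolute-value signs and the axis orientation so that $X$, $ds$ and $I$ carry the correct signs, since all genuine content is a direct unwinding of the definitions.
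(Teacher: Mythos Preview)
Your proposal is correct and follows exactly the route the paper takes: write down the balanced pair $\alpha=dy-p\,dx$, $\tilde\alpha=|f_p|^{-1/2}\frac{p-f}{|p-f|}(dy-f\,dx)$ and read off $X$, $ds$, and $I$ from Definition~\ref{def:BalGenInv}. The paper's proof merely asserts this balanced pair and says ``differentiate,'' whereas you actually derive the scaling and sign of $\tilde\alpha$ from the balancing and axis-orientation conditions and carry out the computation of $I$ explicitly---so your argument is a fleshed-out version of the same proof rather than a different one.
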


\begin{proof}
The contact forms
\begin{equation}\label{eq:BalCoords}
    \alpha = dy - pdx, ~~\tilde\alpha = |f_p|^{-1/2}\left( \frac{p-f}{|p-f|}\right) (dy - f dx)
\end{equation} 
are balanced contact 1-forms for $\xi =\{ dy = pdx\}, \tilde\xi = \{ dy = fdx\}$. The stated formulas follow then by differentiating \eqref{eq:BalCoords} and using definition \ref{def:BalGenInv}.
\end{proof}

\begin{rem}
     {\em One can also verify (for example in local coordinates of the last proposition \ref{rem:explFormulasGen}) that the Schwartzian invariant of proposition \ref{prop:Schwartz} is given by: 
\begin{equation}\label{eq:SchwartzI'}
    \mathcal{S} = 2I' (ds)^2.
\end{equation} 
In particular, for any integral curve, $\gamma(t)$, with $\dot\gamma = X$ of the normalized vector field $X$ along the axis, we have:
\[ \frac12 \int_{a}^{b} \mathcal{S}(\dot\gamma)~dt = I(\gamma(b)) - I(\gamma(a)). \]
For example in case one has $\mathcal{S} > 0$ everywhere, the integral curves of the axis admit {\em no} periodic orbits (if $M$ is compact however, there are always zeroes of $\mathcal{S}$, eg at any critical point of $I$).}
\end{rem}

\begin{rem}\label{rem:DistBasic}
    {\em Already these basic invariants can be used to distinguish between several of the types presented in theorem \ref{thm:lnf}. For example, I$_1$ with $c = -k ^2 <0$ are all oppositely oriented and have $I = k - \frac{1}{k} = cst.$, whereas I$_1$ with $c = k^2 > 0$ are commonly oriented pairs with $I = k + \frac{1}{k} = cst. > 2$. In type I$_2$ we have {\em only commonly oriented pairs} with $I^2 = c^2 \le 4$. Types I, II all have $\mathcal{S}\equiv 0$, so that the types III, IV with $\mathcal{S}\ne 0$ are then distinct from I, II. Distinguishing between the remaining subtypes will be a main focus of \S \ref{sec:Comps}, by using some finer invariants of such pairs (see table \ref{table:Summary}).
    
    We also note that types II-IV may be realized by both commonly (`$+$') or oppositely (`$-$') oriented pairs. Namely type II$_1$ according to the sign $\pm$ in $\tilde\xi = \{ (p + f(y))dy \pm dx = 0\}$, type II$_2$ according to the sign of $f(y)$, types III according to the sign of $f'(p)$, and type IV according to the sign of $f_p$.}
\end{rem}

\section{Two examples}\label{sec:Exs}

Before beginning the computations of \S \ref{sec:Comps} to establish the normal forms of theorem \ref{thm:lnf}, we mention a pair of examples.

\subsection{Anosov flows}\label{sec:An}
The most well-known instances of pairs of transverse contact structures are the {\em bi-contact structures}: when the pairs induce opposite orientations ($(M, \xi, \tilde \xi)_-$ in the notation here), being of grand interest due to their relations to projective Anosov flows. In particular, for {\em orientable} Anosov flows (those whose invariant stable and unstable foliations are orientable) it has been established (theorem~1.1 of \cite{Hoz} and lemma 2.4, and \S 3.2 of \cite{Mass}) :

\begin{teon}
    {\em Let $X$ be a smooth orientable Anosov vector field on a compact 3-manifold $M$. Then there exist a pair of transverse and oppositely oriented $C^1$ contact distributions:
    \[ \xi = \ker \alpha, ~~\tilde \xi = \ker \tilde\alpha \]
    with $X\in \xi\cap \tilde \xi$, and balanced $C^1$ contact forms $\alpha, \tilde\alpha$
    so that $\ker\{\alpha + \tilde\alpha\}, ~\ker\{\alpha - \tilde\alpha\}$ are integrable distributions on $M$ (namely the foliations by weak stable/unstable manifolds of the Anosov flow).}
\end{teon}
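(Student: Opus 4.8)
The plan is to realize $\xi,\tilde\xi$ by the classical bi-contact construction (cf.~\cite{Mits}), rotating symmetrically through the stable and unstable directions of the Anosov splitting, and then to read off the asserted properties from the structure equations. Write $TM = E^s \oplus \R X \oplus E^u$ for the flow-invariant Anosov splitting into strong stable and strong unstable line bundles, and set $E^{ws} = E^s\oplus\R X$, $E^{wu} = E^u \oplus \R X$ for the (integrable) weak stable and weak unstable distributions. Orientability of the flow makes all these bundles orientable, so I may fix an adapted (Lyapunov) metric, global unit sections $e^s, e^u$, and the dual coframe $\beta^s, \beta^u, \beta^0$ (so $\beta^s(e^s)=\beta^u(e^u)=\beta^0(X)=1$, all other pairings zero); then $\ker\beta^u = E^{ws}$ and $\ker\beta^s = E^{wu}$. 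I would then set
\[ \alpha := \beta^s + \beta^u, \qquad \tilde\alpha := \beta^s - \beta^u, \qquad \xi := \ker\alpha, \quad \tilde\xi := \ker\tilde\alpha. \]
Since $\beta^s, \beta^u$ both annihilate $X$, we get $X \in \xi\cap\tilde\xi$ for free; and $\ker\{\alpha+\tilde\alpha\} = \ker\beta^s = E^{wu}$, $\ker\{\alpha-\tilde\alpha\} = \ker\beta^u = E^{ws}$ are exactly the two integrable weak foliations demanded by the statement.

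The formal heart is a two-line computation. Frobenius integrability of the weak distributions gives $\beta^s\wedge d\beta^s = 0$ and $\beta^u\wedge d\beta^u = 0$, so the diagonal terms drop and
\[ \alpha\wedge d\alpha = \beta^s\wedge d\beta^u + \beta^u\wedge d\beta^s = -\,\tilde\alpha\wedge d\tilde\alpha. \]
The right-hand equality is precisely the balanced relation $\alpha\wedge d\alpha = -\tilde\alpha\wedge d\tilde\alpha$ for an oppositely oriented pair, so the symmetric Ansatz makes $(\alpha,\tilde\alpha)$ balanced automatically and guarantees $\alpha$ is contact iff $\tilde\alpha$ is, with opposite induced orientations --- i.e.~a bi-contact structure $(M,\xi,\tilde\xi)_-$. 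It remains to see the cross term is nonvanishing. Flow-invariance of $E^s, E^u$ gives $[X, e^s] = \mu^s e^s$ and $[X, e^u] = \mu^u e^u$ for the infinitesimal contraction/expansion rates $\mu^s, \mu^u$, and evaluating $d\beta^s, d\beta^u$ on the frame via the invariant formula for the exterior derivative yields
\[ \beta^s\wedge d\beta^u + \beta^u\wedge d\beta^s = (\mu^u - \mu^s)\,\beta^s\wedge\beta^u\wedge\beta^0. \]
Here the adapted metric does the essential work: it upgrades the asymptotic contraction of $E^s$ and expansion of $E^u$ to pointwise infinitesimal estimates, forcing $\mu^s$ and $\mu^u$ to have opposite signs everywhere with a uniform gap. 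Hence $\mu^u - \mu^s$ is nowhere zero, $\alpha\wedge d\alpha$ is a nonvanishing volume form, and the contact condition holds. This is the one place the genuine Anosov hypothesis (rather than mere transversality of two foliations) enters.

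I expect the real obstacle to be regularity, not the computation above. The conclusion claims only $C^1$ structures even though the flow is smooth, and this loss is unavoidable: the strong bundles $E^s, E^u$ --- hence the unit frame $e^s, e^u$ and the forms $\beta^s, \beta^u$ normalized as above --- are in general only H\"older, so $\alpha$ built from the unit frame is only H\"older and $d\alpha$ does not exist classically. The weak foliations $E^{ws}, E^{wu}$ are the more regular objects ($C^1$ in the present setting), so one wants instead $\sigma^s, \sigma^u$ with $\ker\sigma^s = E^{ws}, \ker\sigma^u = E^{wu}$ chosen $C^1$ and coherently co-oriented (possible by orientability), and $\alpha = \sigma^s+\sigma^u, \tilde\alpha = \sigma^s - \sigma^u$. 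The balanced relation and the integrability of $\ker\{\alpha\pm\tilde\alpha\}$ survive verbatim for any normalization, but the contact condition is not scale-invariant: rescaling $\sigma^u \mapsto e^{g}\sigma^u$ shifts the effective rate by $X(g)$, so one must pick the $C^1$ normalizations so as to both retain $C^1$ regularity and preserve the sign gap $\mu^u - \mu^s \ne 0$. Reconciling these demands --- controlling the normalizing functions (a cohomological-type condition along the flow) while staying in the $C^1$ category --- is the delicate analytic content, and is exactly what is carried out in \cite{Hoz}, \cite{Mass}. Once it is in place, all of the asserted properties follow from the formal computation above.
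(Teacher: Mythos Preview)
The paper does not prove this theorem; it is quoted verbatim as an established result from \cite{Hoz} and \cite{Mass} and then used as a black box to derive the corollary on Anosov flows. So there is no ``paper's own proof'' to compare against here.

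That said, your sketch is a faithful outline of how the cited references proceed: the symmetric Ansatz $\alpha = \sigma^s + \sigma^u$, $\tilde\alpha = \sigma^s - \sigma^u$ built from defining forms for the weak foliations, the observation that Frobenius integrability kills the diagonal terms so that the balanced relation $\alpha\wedge d\alpha = -\tilde\alpha\wedge d\tilde\alpha$ is automatic, and the identification of the cross term with the gap $\mu^u - \mu^s$ via an adapted metric. You have also correctly located the genuine difficulty --- the strong bundles are only H\"older, so one must work with $C^1$ defining forms for the weak bundles and control the normalization so as to preserve the sign gap --- and correctly deferred that analytic step to \cite{Hoz}, \cite{Mass}. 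As a self-contained argument your write-up is therefore not complete (and you acknowledge as much), but as a summary of the mechanism it is accurate and matches the literature the paper cites.
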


From which, in relation to theorem \ref{thm:lnf}, we find:

\begin{corn}
    {\em Let $M$ be a compact 3-manifold, and $X$ a smooth orientable Anosov vector field on $M$. Then there exists a pair of oppositely oriented transverse contact structures $(M, \xi, \tilde \xi)_-$ on $M$ with $X\in \xi\cap \tilde \xi$ such that, around any point $m\in M$, the pair has the local form of item I$_1$ with $c = -1$ of theorem \ref{thm:lnf}:
    \[ \xi = \{ dy = p dx\}, ~~\tilde \xi = \{ dy = - p dx\}. \]
    where the coordinates $x,y$ are $C^1$ and $p$ a $C^1$ coordinate along integral curves of $X$.}
\end{corn}
\begin{proof}
    Since the balanced $C^1$ contact forms of the previous theorem have $\alpha + \tilde\alpha, \alpha - \tilde\alpha$ defining integrable distributions, we have $I\equiv 0$, which corresponds to type I$_1$ with $c = -1$. Because of the low regularity in this case, we should take some care to keep track of the regularity of the coordinates, so we recall the construction from proposition \ref{prop:Icst}. We first choose locally some $C^1$ functions $x,y$ so that $\alpha + \tilde\alpha = a dx, ~~\alpha - \tilde\alpha = b dy$, for some (in general $C^0$) non-vanishing functions $a, b$. Then we have in this neighborhood: $(\alpha, \tilde\alpha)$ proportional to $(dy - pdx, ~ dy + pdx)$ for $p = -a/b$. Since $X$ and its flow $\fe_t$ are smooth, and $\xi, \tilde\xi$ are $C^1$, the map: $\fe_t(m) \mapsto  \fe_{-t,*}\xi_{\fe_t(m)} \in \mathbb{P}(T_mM/X_m)$ is $C^1$ (and locally invertible). Consequently, for each $m$ in this neighborhood, $\R\ni t\mapsto p(\fe_t(m))\in \R$ is also $C^1$ and locally invertible around $t = 0$.
\end{proof}

In other words, Anosov flows can be (globally) supported by the `simplest' types of bi-contact structures (a bi-contact structure is said to be {\em supporting} a vector field $X$ if $X\in A = \xi\cap \tilde \xi$). However, we note that in general none of these local symmetries need extend to give any global symmetries of such a pair.

\begin{rem}\label{rem:RegAnosov}  {\em Because of the low ($C^1$) regularity of the coordinates in the last corollary, rather than a symmetry algebra it is more appropriate here to state the local symmetry group, namely the transformations: 
    \[ (x,y,p)\mapsto (F(x), G(y), \frac{G'(y)}{F'(x)}p).\]
Even in cases with higher regularity (if this regularity is high enough, such flows are known to be topologically equivalent to `algebraic Anosov flows': quotients of geodesic flow on the hyperbolic plane, or suspension of a hyperbolic linear diffeomorphism of a torus, see \cite{Ghys}, \cite{Ghys2}), we find the last corollary rather surprising, since the local symmetry groups are {\em infinite dimensional} (depending upon two arbitrary invertible functions of a single variable) rather than the naive expectation of say a mere 3-dimensional symmetry algebra ($\mathfrak{sl}_2(\R)$).}
\end{rem}

There are many bi-contact structures supporting a given projective Anosov flow. It would be interesting to examine to what extent more general projective Anosov flows might be globally supported by other such `locally symmetric' items of theorem \ref{thm:lnf}.

For instance, one might examine to what extent those projective Anosov flows which are {\em not} Anosov might {\em fail} to be globally supported by item I$_1$. For example, an oppositely oriented pair $(M, \xi, \tilde\xi)_-$ with $I = cst.$ (ie type I$_1$ with $c<0$) has associated to it a pair of invariant foliations containing the axis (see proposition \ref{prop:IntDist} below). It is known (see example 2.2.9 of \cite{ThurstEl}) that there exist projective Anosov flows which are {\em not} contained in any invariant (rank 2) foliations. Thus, such projective Anosov flows  {\em cannot} be given globally as an intersection of a pair of oppositely oriented contact structures with generating in variant, $I$, constant.

\subsection{Integrable contact vector fields}
On the other hand, we recall a definition of contact integrability\footnote{A definition can also be given for general contact vector fields: \cite{KhTabContInt}. Here we consider, for simplicity, the Reeb case.} :

\begin{dfn}[\cite{KhTabContInt}]
    {\em For a 3-dimensional contact manifold $(M,\xi)$, a vector field $Y$ {\em transverse} to $\xi$ and whose flow preserves $\xi$ (a {\em Reeb field} of $\xi$) is said to be {\em contact integrable} if there exists a Legendrian foliation $\mathcal{G}$ ($T\mathcal{G}\subset \xi$) of $(M, \xi)$ preserved by $Y$.}
\end{dfn}
\begin{rem}
    {\em The Legendrian foliation $\mathcal{G}$ being preserved by $Y$ is equivalent to the distribution $T\mathcal{G}\oplus \langle Y\rangle = T\mathcal{F}$ being integrable (such a foliation $\mathcal{F}$ is called a {\em co-Legendrian} foliation of $(M, \xi)$).}
\end{rem}

In particular, since any symmetry field $Y$ of $(M, \xi, \tilde \xi)$ will preserve the Legendrian foliation by the axes $A = \xi\cap \tilde \xi$, any such symmetry field, transverse to $\xi$ say, is contact integrable in the above sense. One can expect such a symmetry field to then have associated integrals by any invariants (eg $I$ of definition~\ref{def:BalGenInv}) of the pair $(M, \xi, \tilde \xi)$. Conversely:
\begin{prop}
    Suppose $Y$ is a contact integrable Reeb field of $(M, \xi)$, and the associated co-Legendrian foliation is given by:
    \[ T\mathcal{F} = T\mathcal{G}\oplus \langle Y\rangle = \ker \gamma \]
    for $\gamma$ some {\em closed} 1-form on $M$. Then there exists a commonly oriented contact structure $\tilde \xi$ on $M$ transverse to $\xi$, also preserved by $Y$, and so that $(M, \xi, \tilde \xi)_+$ is at any point of $m\in M$ locally given by item I$_2$ of theorem \ref{thm:lnf} with $c = 2$:
    \[ \xi = \{ dy = pdx\}, ~~\tilde \xi = \{ dy + \frac{dx}{p+2} = 0\}. \]
\end{prop}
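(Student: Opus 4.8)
The plan is to write $\tilde\xi$ down by an explicit, coordinate-free formula built from a contact form $\alpha$ for $\xi$ and the closed defining form $\gamma$ of the co-Legendrian foliation, verify the required properties directly, and then let the classification of Theorem~\ref{thm:lnf} (with the distinguishing invariants of Remark~\ref{rem:DistBasic}) identify the local form. Concretely, fix $\alpha$ with $\ker\alpha=\xi$ and the closed $\gamma$ with $\ker\gamma=T\mathcal{F}=T\mathcal{G}\oplus\langle Y\rangle$, put $g:=\alpha(Y)$, and set
\[ \tilde\alpha := \alpha - g\,\gamma, \qquad \tilde\xi := \ker\tilde\alpha. \]
Three facts drive everything: $d\gamma=0$ by hypothesis; $\gamma(Y)=0$ since $Y\in T\mathcal{F}=\ker\gamma$, whence $\mathcal{L}_Y\gamma=d(i_Y\gamma)+i_Yd\gamma=0$; and $\mathcal{L}_Y\alpha=\lambda\alpha$ for some function $\lambda$ since $Y$ preserves $\xi$, whence also $Y(g)=i_Y\mathcal{L}_Y\alpha=\lambda g$.

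I would then check the properties in order. \emph{Axis and transversality}: since $T\mathcal{G}\subset \xi\cap T\mathcal{F}=\ker\alpha\cap\ker\gamma$, both $\alpha$ and $\gamma$, hence $\tilde\alpha$, annihilate $T\mathcal{G}$; and $\tilde\alpha\not\propto\alpha$ because $\gamma\not\propto\alpha$ (as $Y\in\ker\gamma\setminus\xi$ forces $\ker\gamma\ne\xi$). Thus $\xi,\tilde\xi$ are transverse with common axis $\xi\cap\tilde\xi=T\mathcal{G}$; in particular $Y\notin\xi\supset A$, so $Y$ is automatically transverse to the axis and the hypothesis of Theorem~\ref{thm:lnf} holds. \emph{Symmetry}: $\mathcal{L}_Y\tilde\alpha=\lambda\alpha-(Yg)\gamma-g\,\mathcal{L}_Y\gamma=\lambda\alpha-\lambda g\gamma=\lambda\tilde\alpha$, so $Y$ preserves $\tilde\xi$ and is an infinitesimal symmetry of the pair. \emph{Contact form, orientation, balancing}: the key identity is
\[ \tilde\alpha\wedge d\tilde\alpha = \alpha\wedge d\alpha. \]
Indeed $d\tilde\alpha=d\alpha-dg\wedge\gamma$ (using $d\gamma=0$), so the correction is $-(\alpha\wedge dg\wedge\gamma+g\,\gamma\wedge d\alpha)$; from $0=i_Y(\alpha\wedge d\alpha\wedge\gamma)$ (a $4$-form on a $3$-manifold) together with $i_Y\gamma=0$ one gets $\alpha\wedge(i_Yd\alpha)\wedge\gamma=g\,d\alpha\wedge\gamma$, while $\mathcal{L}_Y\alpha=\lambda\alpha$ gives $dg=\lambda\alpha-i_Yd\alpha$, so $\alpha\wedge dg\wedge\gamma=-g\,d\alpha\wedge\gamma=-g\,\gamma\wedge d\alpha$ and the correction cancels. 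Hence $\tilde\xi$ is contact ($\alpha\wedge d\alpha\ne0$), the pair is commonly oriented (the coefficient is $+1$), and $\alpha,\tilde\alpha$ are already balanced.

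To read off the normal form, note $\alpha-\tilde\alpha=g\gamma$ is a multiple of the closed form $\gamma$, so $(\alpha-\tilde\alpha)\wedge d(\alpha-\tilde\alpha)=0$, i.e. $\alpha-\tilde\alpha$ is integrable. For balanced, commonly oriented forms the expansion $(\alpha+t\tilde\alpha)\wedge d(\alpha+t\tilde\alpha)=(1+tI+t^2)\,\alpha\wedge d\alpha$ then shows $t=-1$ is a root of $t^2+It+1$, forcing $I\equiv 2$ (equivalently one computes $I=2$ directly from Proposition~\ref{rem:explFormulasGen}). Consequently $\mathcal{S}=2I'(ds)^2\equiv0$ and $I\equiv2$ is constant. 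By Theorem~\ref{thm:lnf} the pair is one of I--IV; vanishing of $\mathcal{S}$ excludes III and IV, the commonly oriented type-I$_1$ pairs have $I>2$ strictly, and the remaining type-II forms have non-constant $I$ unless they degenerate into type I, so $I\equiv2$ constant and commonly oriented pins the pair to item I$_2$ with $c=2$, namely $\xi=\{dy=pdx\},\ \tilde\xi=\{dy+\tfrac{dx}{p+2}=0\}$ (cf. Remark~\ref{rem:DistBasic}).

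The \emph{main obstacle} is the contact/orientation step: establishing $\tilde\alpha\wedge d\tilde\alpha=+\,\alpha\wedge d\alpha$ with the correct sign. This single identity simultaneously guarantees that $\tilde\xi$ is a genuine contact distribution, that the pair is commonly rather than oppositely oriented (matching the claimed ``$+$''), and that $\alpha,\tilde\alpha$ are balanced so that the value $I=2$ is read off at once. It is exactly here that the hypotheses enter essentially: the closedness of $\gamma$ together with $\gamma(Y)=0$ force the two correction terms to cancel through the interior-product identity, and the $Y$-invariance of $\xi$ is what lets $d(\alpha(Y))$ be traded for $-i_Yd\alpha$ modulo $\alpha$. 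As a cross-check, and as an alternative self-contained local argument, one may first reduce to Darboux coordinates with $\xi=\{dy=pdx\}$, $A=\langle\del_p\rangle$, $Y=\del_x$ and $\gamma=dy$, in which $\tilde\alpha=-p\,dx+(1+p)\,dy$ and all four verifications become one-line computations.
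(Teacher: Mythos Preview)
Your proof is correct and follows the same idea as the paper's: define $\tilde\alpha$ as $\alpha$ plus a multiple of the closed form $\gamma$, verify that $\alpha,\tilde\alpha$ are balanced with $I\equiv 2$, and conclude via the classification. The main difference is one of normalization. The paper exploits from the outset that $Y$ is a \emph{Reeb} field, choosing the specific contact form with $i_Y\alpha\equiv 1,\ i_Yd\alpha\equiv 0$; then $\gamma\wedge d\alpha\equiv 0$ is immediate (since $i_Y d\alpha=0$ and $\gamma|_{T\mathcal{F}}=0$), and with $\tilde\alpha=\alpha+\gamma$ the balance identity $\tilde\alpha\wedge d\tilde\alpha=\alpha\wedge d\alpha$ as well as $L_Y\tilde\alpha=0$ become one-line computations. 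You instead work with an arbitrary contact form and the function $g=\alpha(Y)$, which forces you to prove the cancellation of the correction terms via the interior-product trick --- correct, but this is precisely the labor the Reeb normalization avoids. Your route has the minor advantage of showing that the construction is independent of the choice of defining form for $\xi$, while the paper's buys brevity. For the final identification the paper appeals directly to Proposition~\ref{prop:Icst} (any pair with $I$ constant has the stated form), whereas you route through Theorem~\ref{thm:lnf} and the invariants of Remark~\ref{rem:DistBasic}; both are valid, and your observation that $\alpha-\tilde\alpha$ is integrable, forcing $t=-1$ to solve $t^2+It+1=0$, is a nice way to read off $I=2$ without further computation.
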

\begin{proof}
    Let $\alpha$ be a contact form for $\xi$ having $Y$ as its Reeb field ($i_Y\alpha \equiv 1, i_Yd\alpha \equiv 0$), and note that $\gamma\wedge d\alpha \equiv 0$. Taking $\tilde\alpha = \alpha + \gamma$ we find: $\tilde\alpha\wedge d\tilde\alpha = \alpha\wedge d\alpha$, so we have balanced contact forms $\alpha, \tilde\alpha$, for the pair $(M, \xi, \tilde \xi)_+$ where $\tilde \xi = \ker\tilde\alpha$. This pair has $I \equiv 2$, so by proposition~\ref{prop:Icst}, is locally equivalent around any point to item I$_2$ of theorem \ref{thm:lnf} with $c = 2$. Moreover: $L_Y\tilde\alpha = d(i_Y\alpha) = 0$, so that $Y$ is an infinitesimal symmetry of this pair $\xi, \tilde \xi$.
\end{proof}

\begin{rem}
   {\em The condition that $\gamma$ be closed in the last proposition is rather restrictive. For example, the holonomy of $\mathcal{F}$ must be trivial. It would be natural then to explore weakening this condition in future work.}
\end{rem}

A contact integrable vector field $Y$ might as well preserve some other pairs $(M, \xi', \tilde \xi')$ different from those of the previous proposition. It would be interesting to examine which items of theorem \ref{thm:lnf} may arise in some more specific examples of integrable contact vector fields (in particular those items with non-trivial invariants corresponding to non-trivial integrals of the system).

\section{Computations}\label{sec:Comps}

Here we will determine the normal forms of theorem \ref{thm:lnf} (see \S \ref{sec:prf} for a summary). In this section, we will not concern ourselves with the regularity of the contact structures (everything will be assumed as differentiable as necessary). For many of the proofs, one may be able to get by with less regularity (as we have seen above for the Anosov flows example \S \ref{sec:An}).  First, we have:

\begin{prop}\label{prop:genLNF}
    Suppose $(M, \xi, \tilde \xi)$ admits an infinitesimal symmetry transverse to the axis at $m\in M$. Then there are local coordinates in which: $\xi = \{ dy = pdx\} , ~\tilde \xi = \{ dy = f(y,p)dx\}$.
\end{prop}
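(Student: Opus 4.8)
The plan is to first place $\xi$ in Darboux normal form in a way adapted to the axis, and then use the symmetry to remove the $x$-dependence of the defining function of $\tilde\xi$.

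\emph{Step 1: axis-adapted Darboux coordinates.} The integral curves of the axis $A = \xi\cap\tilde\xi$ form a Legendrian foliation of $(M,\xi)$; as recalled in Figure \ref{fig:ProjStr}, the leaf space is locally a surface $N$ and the contact condition identifies a neighborhood of $m$ with an open set in the space of contact elements of $N$ (equivalently, a $1$-jet space). Concretely, I would choose Darboux coordinates $(x,y,p)$ for $\xi$, i.e.\ $\xi = \{dy = p\,dx\}$, in which the axis is the $p$-line foliation, $A = \langle\del_p\rangle$. In such coordinates a contact distribution $\tilde\xi$ transverse to $\xi$ and meeting it along $A = \langle\del_p\rangle$ must contain $\del_p$, hence is cut out by a $1$-form with no $dp$-component, so $\tilde\xi = \{dy = f(x,y,p)\,dx\}$ for some function $f$ (cf.\ Corollary \ref{rem:Schwartz}). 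It then remains only to arrange that $f$ not depend on $x$.

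\emph{Step 2: the symmetry is a prolonged point transformation.} In these coordinates the infinitesimal symmetry $Y$, preserving $\xi$, is a contact vector field, hence of the form \eqref{eq:ContHamVf} for some Hamiltonian $H(x,y,p)$. Since $Y$ preserves $\tilde\xi$ and $\xi$, it preserves the axis $A = \langle\del_p\rangle$, so $[\del_p, Y]\in\langle\del_p\rangle$. A direct computation of $[\del_p,Y]$ from \eqref{eq:ContHamVf} gives a $\del_x$-component equal to $H_{pp}$, so this bracket condition forces $H_{pp} = 0$: the Hamiltonian is affine in $p$, $H = p\,a(x,y) - b(x,y)$. Reading off the $\del_x,\del_y$ components of \eqref{eq:ContHamVf} then shows $Y = a\,\del_x + b\,\del_y + (\ldots)\del_p$ is the prolongation $\zeta^{(1)}$ to the jet space of the planar vector field $\zeta = a(x,y)\del_x + b(x,y)\del_y$, and transversality of $Y$ to $A = \langle\del_p\rangle$ is exactly the condition $(a,b)\neq(0,0)$ at $m$, i.e.\ $\zeta(m)\neq 0$.

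\emph{Step 3: straighten and conclude.} By the flow-box theorem I would choose new planar coordinates $(\bar x,\bar y)$ near the image of $m$ with $\zeta = \del_{\bar x}$ (so $\bar y$ is a first integral of $\zeta$). This planar change of coordinates is a point transformation; prolonging it to the slope variable via $\bar p = (\bar y_x + p\,\bar y_y)/(\bar x_x + p\,\bar x_y)$ yields coordinates $(\bar x,\bar y,\bar p)$ that again put $\xi$ in Darboux form $\xi = \{d\bar y = \bar p\,d\bar x\}$, keep the axis along $\del_{\bar p}$, and, by naturality of the prolongation, carry $Y = \zeta^{(1)}$ to $(\del_{\bar x})^{(1)} = \del_{\bar x}$. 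Relabelling, we then have $\xi = \{dy = p\,dx\}$, $\tilde\xi = \{dy = f(x,y,p)\,dx\}$, and $Y = \del_x$. Finally $L_{\del_x}(dy - f\,dx) = -f_x\,dx$, and for $\del_x$ to preserve $\tilde\xi$ this must be proportional to $dy - f\,dx$, forcing $f_x\equiv 0$; hence $f = f(y,p)$, as claimed. The main obstacle is Step 1: producing Darboux coordinates for $\xi$ in which the axis is simultaneously the $p$-line foliation, where the Legendrian-foliation structure of the axis (and the identification of its leaf space with a surface of contact elements) is essential. Once the axis is normalized to $\langle\del_p\rangle$, its preservation by $Y$ is exactly what pins $H$ down to be affine in $p$ and reduces the residual freedom to ordinary planar point transformations, after which the straightening and the final $x$-independence are routine.
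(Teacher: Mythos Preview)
Your argument follows essentially the same route as the paper's: axis-adapted Darboux coordinates for $\xi$, the observation that a symmetry preserving the axis is the contact lift of a planar vector field, straightening that planar field, and then reading off $f_x=0$ from the symmetry condition. Your bracket computation $[\del_p,Y]\in\langle\del_p\rangle\Rightarrow H_{pp}=0$ is just a hands-on version of the paper's statement that contact transformations preserving $\langle\del_p\rangle$ are exactly lifts of point transformations.

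There is one point you pass over that the paper handles explicitly. After the flow-box change $(x,y)\mapsto(\bar x,\bar y)$ and its prolongation, you assert that $\tilde\xi$ is again of the form $\{d\bar y=\bar f\,d\bar x\}$. This requires the level curves $\bar y=\text{const}$ (equivalently, the direction $\del_{\bar y}$) to be transverse to $\tilde\xi$ at $m$; otherwise $\bar f$ blows up there. This is not automatic: in the borderline case $Y_m\in\tilde\xi_m$, straightening gives $\del_{\bar x}\in\tilde\xi_m$, and a careless choice of first integral $\bar y$ can land $\del_{\bar y}$ in $\tilde\xi_m$ as well. The paper fixes this by noting the residual freedom $(s,t)\mapsto(s+\varphi(t),\psi(t))$ in the flow-box coordinates and using it to make the $t$-axis transverse to \emph{both} contact planes at $m$. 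You should add the analogous sentence: choose the first integral $\bar y$ so that $d\bar y|_{\xi_m}\ne 0$ and $d\bar y|_{\tilde\xi_m}\ne 0$, which is possible since only two directions in the plane are forbidden.
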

\begin{proof}
If a pair $(M, \xi, \tilde\xi)$ admits a symmetry field $Y$ transverse to its axis $\xi\cap\tilde\xi$, we can consider the various cases according to whether at $m\in M$: (i) $Y_m$ is transverse to both $\xi, \tilde\xi$ (the `generic' type of point), or (ii) $Y_m\in \tilde\xi_m$, or (iii) $Y_m\in\xi_m$. Clearly, up to re-labelling of the pairs it is enough to consider the cases (i) and (ii): both having, at $m\in M$, that $Y_m$ is transverse to $\xi_m$.

    We may always take local Darboux coordinates $(x,y,p)$ in which $\xi = \{ dy = pdx\}$ and the axis is rectified as $\langle \del_p\rangle$ so that its pair is given by $\tilde\xi = \{ dy = f(x,y,p)dx\}$. Note that any contact transformation of $\xi$ preserving this axis $\langle\del_p \rangle$ projects to a transformation of the $xy$-plane, and conversely any transformation of the $xy$-plane lifts (its `{\em contact lift}') to a contact transformation of $\xi = \{ dy = pdx\}$ (view $(x,y,p)$ as coordinatizing the contact elements of the $xy$-plane with standard contact structure $dy = pdx$, explicitly $(x,y)\mapsto (F(x,y), G(x,y))$ lifts to $(x,y,p)\mapsto (F(x,y), G(x,y), \frac{G_x + G_y p}{F_x + F_y p})$). Such contact transformations are generated by the contact lifts of vector fields $u\del_x + v\del_y$ on the $xy$-plane:
    \[ u(x,y)\del_x + v(x,y) \del_y + w \del_p , ~~~w = v_x + p v_y - p( u_x + p u_y) \]
    for such a vector field to also preserve $dy = f(x,y,p)dx$ we have the further condition:
    \begin{equation}\label{eq:symm}
        0 = f(v_y - fu_y) - (v f_y + w f_p) + v_x - (fu)_x.
    \end{equation}
    Suppose then that such a pair $\xi, \tilde \xi$,  admits an infinitesimal symmetry transverse to the axis. Then, since this infinitesimal symmetry is transverse to the axis, $\langle \del_p\rangle$, it projects to a {\em non-singular} vector field on the $xy$-plane. We then consider a change of coordinates on the $xy$-plane $(x,y)\mapsto (s, t)$ rectifying this symmetry field to say $\del_s$ in these new coordinates. Without loss of generality, we may always assume at a given point that we have chosen our transversal curve to the symmetry field (the $t$-axis) to be transverse to the pair of contact structures at this point (analytically: we may apply transformations $(s,t)\mapsto (s+f(t), g(t)) = (S, T)$ with, still, the symmetry field $\del_s\mapsto \del_S$). Under a contact lift of such a change of coordinates, $(x,y,p)\mapsto (s,t,\s)$ with the $t$-axis transverse to the contact structures at the correspondent point to $m\in M$, we then have the form $dt = \s ds, dt = F(s,t,\s)ds$ where for cases (i), (ii) we have that $m$ corresponds some point with $\s\ne 0$ (for case (iii) some point with $\s = 0$). In any case, we have a symmetry field $\del_s$ of the pair $dt = \s ds, dt = F ds$ and equation \eqref{eq:symm} reads $F_s \equiv 0$, ie $F(t,\s)$ is independent of $s$ as claimed.
\end{proof}

This simple computation establishes our general normal form of a pair admitting a symmetry (item IV of theorem \ref{thm:lnf}) and includes all the other items as certain sub-cases. However it is not clear from this last proposition \ref{prop:genLNF} whether such a pair might admit {\em more} infinitesimal symmetries. The main thrust of theorem \ref{thm:lnf}, and the following computations in this section, is to give a finer description of distinct subcases of proposition \ref{prop:genLNF}, and in particular their explicit characterizations in terms of relevant invariants of the pair. For this finer description, we will often use (following directly from definition~\ref{def:BalGenInv}):

\begin{prop}
    Let $\alpha, \tilde\alpha$ be balanced contact forms for $(M, \xi, \tilde \xi)_\pm$ and $\beta$ a 1-form on $M$ with $\beta|_A = ds$. Then such contact forms satisfy:
    \begin{equation}\label{eq:GenBalStrEq}
        d\alpha = \tilde h \alpha\wedge\tilde\alpha + \tilde\alpha\wedge \beta + g \beta\wedge\alpha , ~~d\tilde\alpha = h \alpha\wedge\tilde\alpha + (I - g )\tilde\alpha\wedge \beta \pm  \beta\wedge\alpha
    \end{equation}
    for some functions $h, \tilde h,  g$ on $M$ (and where $I:M\to\R$ is the generating invariant from definition \ref{def:BalGenInv}). Note that $\alpha, \tilde\alpha$ are determined up to simultaneous scalings: $\alpha, \tilde\alpha\mapsto \lambda\alpha, \lambda\tilde\alpha$, and $\beta$ up to shifts $\beta\mapsto \beta + b_1\alpha + b_2\tilde\alpha$.
\end{prop}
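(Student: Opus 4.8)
The plan is to exploit the fact that $\alpha,\tilde\alpha,\beta$ form a coframe on $M$. Since $\xi = \ker\alpha$ and $\tilde\xi = \ker\tilde\alpha$ both contain the axis $A$, the forms $\alpha,\tilde\alpha$ annihilate $A$, whereas $\beta|_A = ds$ is nonzero on $A$ (as $ds(X) = 1$); together with the transversality $\xi\ne\tilde\xi$ (so $\alpha,\tilde\alpha$ are independent) this shows $\{\alpha,\tilde\alpha,\beta\}$ is everywhere linearly independent. Hence $\{\alpha\wedge\tilde\alpha,\ \tilde\alpha\wedge\beta,\ \beta\wedge\alpha\}$ frames the $2$-forms, and I may write
\[
d\alpha = \tilde h\,\alpha\wedge\tilde\alpha + a\,\tilde\alpha\wedge\beta + g\,\beta\wedge\alpha, \qquad
d\tilde\alpha = h\,\alpha\wedge\tilde\alpha + b\,\tilde\alpha\wedge\beta + e\,\beta\wedge\alpha
\]
for some functions $\tilde h, a, g, h, b, e$. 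The whole content of the proposition is then just to show $a = 1$, $e = \pm 1$, and $b = I - g$; the coefficients $\tilde h, h$ remain genuinely free.

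I would fix the coefficients by wedging against $\alpha$ or $\tilde\alpha$, which annihilates all but one term at a time. Wedging the first display with $\alpha$ gives $\alpha\wedge d\alpha = a\,\alpha\wedge\tilde\alpha\wedge\beta$; comparing with remark \ref{rem:altForms}, where $\alpha\wedge d\alpha = \alpha\wedge\tilde\alpha\wedge\beta$ precisely because $\beta|_A = ds$, forces $a = 1$. Wedging the second display with $\tilde\alpha$ gives $\tilde\alpha\wedge d\tilde\alpha = e\,\tilde\alpha\wedge\beta\wedge\alpha = e\,\alpha\wedge\tilde\alpha\wedge\beta$ (the reordering costs two transpositions, hence no sign). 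The balanced condition $\alpha\wedge d\alpha = \pm\,\tilde\alpha\wedge d\tilde\alpha$ from definition \ref{def:BalGenInv} then reads $1 = \pm\,e$, i.e.\ $e = \pm 1$ with the orientation sign of the pair.

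For the remaining relation, the same wedging trick gives $\alpha\wedge d\tilde\alpha = b\,\alpha\wedge\tilde\alpha\wedge\beta$ and $\tilde\alpha\wedge d\alpha = g\,\alpha\wedge\tilde\alpha\wedge\beta$, while $\alpha\wedge d\alpha = \alpha\wedge\tilde\alpha\wedge\beta$. Substituting into the definition $I = (\alpha\wedge d\tilde\alpha + \tilde\alpha\wedge d\alpha)/(\alpha\wedge d\alpha)$ of definition \ref{def:BalGenInv} yields $I = b + g$, i.e.\ $b = I - g$, which is exactly the claimed form. The stated indeterminacies are then immediate bookkeeping: a simultaneous rescaling $\alpha,\tilde\alpha\mapsto\lambda\alpha,\lambda\tilde\alpha$ preserves the balanced normalization, and a shift $\beta\mapsto\beta + b_1\alpha + b_2\tilde\alpha$ leaves $\beta|_A = ds$ unchanged while merely reshuffling the coefficients.

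I expect no serious obstacle here: once the coframe observation is in place, everything reduces to reading off coefficients by wedging against $\alpha$ or $\tilde\alpha$. The only points demanding care are the exterior-algebra sign in reordering $\tilde\alpha\wedge\beta\wedge\alpha$ into $\alpha\wedge\tilde\alpha\wedge\beta$, and carrying the orientation sign $\pm$ consistently from the balanced condition to the conclusion.
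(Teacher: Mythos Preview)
Your argument is correct and is exactly the unpacking the paper has in mind when it says the proposition follows ``directly from definition \ref{def:BalGenInv}'': expand $d\alpha,d\tilde\alpha$ in the coframe, then wedge with $\alpha$ or $\tilde\alpha$ and invoke remark \ref{rem:altForms}, the balanced condition, and the definition of $I$ to pin down the three constrained coefficients. There is nothing to add.
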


\begin{prop}\label{prop:IntDist}
    Let $\alpha, \tilde\alpha$ be balanced contact forms for $(M, \xi, \tilde \xi)_\pm$. Then the kernel of $\alpha + \rho \tilde\alpha$ determines an integrable distribution containing the axis when the function $\rho$ satisfies:
    \begin{equation}\label{eq:Ric}
        \rho' = 1 + I\rho \pm \rho^2.
    \end{equation}
\end{prop}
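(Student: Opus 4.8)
The plan is to apply the Frobenius integrability criterion directly to the $1$-form $\omega := \alpha + \rho\tilde\alpha$. First I would note that $\omega$ is nowhere zero (since $\alpha,\tilde\alpha$ are linearly independent, having distinct kernels), so $\ker\omega$ is genuinely a rank-two distribution; and it automatically contains the axis. Indeed $A = \ker\alpha\cap\ker\tilde\alpha$, so both $\alpha$ and $\tilde\alpha$ annihilate $A$ and hence so does $\omega$, for \emph{every} function $\rho$. Equivalently, in the coframe $\{\alpha,\tilde\alpha,\beta\}$ dual to the frame $\{X_1,X_2,X\}$ with $X$ the normalized axis field, one has $\omega(X)=0$. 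Thus the entire content of the proposition is the integrability condition, which by Frobenius reads $\omega\wedge d\omega = 0$.

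Next I would compute $d\omega$ from the balanced structure equations \eqref{eq:GenBalStrEq}. Expanding $d\rho$ in the coframe as $d\rho = \rho_1\alpha + \rho_2\tilde\alpha + \rho'\beta$, where the $\beta$-component is exactly the derivative $\rho' = X\rho$ along the axis by the definition of $X$ and remark \ref{rem:altForms}, and substituting the structure equations, I obtain
\[
d\omega = P\,\alpha\wedge\tilde\alpha + Q\,\tilde\alpha\wedge\beta + R\,\beta\wedge\alpha,
\]
with $Q = 1 + \rho(I-g) - \rho'$, $R = g \pm \rho$, and $P$ a coefficient collecting $\tilde h$, $\rho h$, $\rho_1$ that will play no role.

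Then I would wedge with $\omega = \alpha + \rho\tilde\alpha$, using $\alpha\wedge\tilde\alpha\wedge\beta = \alpha\wedge d\alpha =: \Omega$ from remark \ref{rem:altForms}. Only the terms $\alpha\wedge(Q\,\tilde\alpha\wedge\beta)$ and $\rho\,\tilde\alpha\wedge(R\,\beta\wedge\alpha)$ produce a multiple of $\Omega$, giving
\[
\omega\wedge d\omega = (Q + \rho R)\,\Omega = \bigl(1 + I\rho \pm \rho^2 - \rho'\bigr)\,\Omega.
\]
The only delicate point, which I expect to be the crux of the bookkeeping, is the cancellation of all the auxiliary data: the undetermined functions $h,\tilde h$ and the transverse components $\rho_1,\rho_2$ of $d\rho$ are absorbed into $P$ (or never appear), which is wiped out upon wedging with $\omega$; and the $g$-terms cancel in $Q+\rho R$, as they must, since $g$ only records the gauge freedom $\beta\mapsto\beta+b_1\alpha+b_2\tilde\alpha$. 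Since $\Omega$ is a volume form, $\omega\wedge d\omega=0$ is equivalent to $\rho' = 1 + I\rho \pm \rho^2$, which is precisely \eqref{eq:Ric}, and the integrability then yields (via the local Frobenius foliation) a rank-two integrable distribution containing $A$, completing the argument.
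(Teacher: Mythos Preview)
Your proof is correct and is exactly the computation the paper leaves implicit: the proposition is stated in the paper without proof, immediately after the structure equations \eqref{eq:GenBalStrEq}, with the understanding that one verifies Frobenius integrability $\omega\wedge d\omega=0$ for $\omega=\alpha+\rho\tilde\alpha$ directly from those equations. Your bookkeeping (the cancellation of $g$ in $Q+\rho R$ and the irrelevance of $P$) is accurate, and you in fact establish the equivalence rather than only the stated implication.
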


\begin{prop}
    If $Y$ is an infinitesimal symmetry of $(M, \xi, \tilde\xi)$ transverse to the axis, then $A\oplus Y$ is an integrable distribution (recall $A = \xi \cap \tilde\xi$ is the axis of the pair).
\end{prop}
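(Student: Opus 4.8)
The plan is to show that the distribution $A \oplus \langle Y \rangle$ is closed under Lie bracket. Since $A$ is a line field, any local section of $A$ has the form $\lambda X$ for a nonvanishing function $\lambda$ and $X$ a spanning field of $A$; and any section of $A \oplus \langle Y \rangle$ is a combination $aX + bY$. To verify integrability via the Frobenius condition, it suffices (since the distribution has rank two) to check that the bracket of two spanning sections stays in the distribution, and by bilinearity of the bracket modulo the distribution it is enough to compute $[X, Y] \bmod (A \oplus \langle Y \rangle)$ for fixed spanning fields $X$ of $A$ and the symmetry field $Y$.

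The key point I would use is that $Y$ is a symmetry: its flow $\fe_t$ satisfies $\fe_{t*}\xi = \xi$ and $\fe_{t*}\tilde\xi = \tilde\xi$, hence $\fe_{t*}(\xi \cap \tilde\xi) = \xi \cap \tilde\xi$, i.e. the flow of $Y$ preserves the axis $A$. Differentiating this invariance at $t = 0$, the Lie derivative $L_Y$ preserves sections of $A$: concretely, for $X$ a local spanning field of $A$, we have $[Y, X] = -L_Y X$, and since $\fe_{t*}X$ remains a section of the line field $A$ for all $t$, its $t$-derivative $L_Y X$ lies in $A$ as well. Thus $[Y, X] \in A \subset A \oplus \langle Y \rangle$. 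This is the heart of the argument: infinitesimal preservation of the line field $A$ forces the bracket of $Y$ with any section of $A$ to land back in $A$.

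With $[Y, X] \in A$ established, integrability of $A \oplus \langle Y \rangle$ follows immediately: the bracket of the two spanning sections $X$ and $Y$ lies in $A$, hence in $A \oplus \langle Y \rangle$, and any other pair of sections differs from $X, Y$ by functional multiples and additions, whose brackets differ from $[X,Y]$ only by terms tangent to $X$ and $Y$ (the $dg(\cdot)$ terms in the product rule for brackets), which are automatically in the distribution. By the Frobenius theorem, $A \oplus \langle Y \rangle$ is integrable. The transversality of $Y$ to the axis is what guarantees $A \oplus \langle Y \rangle$ genuinely has rank two (so that it is an honest distribution rather than degenerating to $A$ itself), and this is used implicitly throughout.

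I do not expect a serious obstacle here; the only subtlety is the clean bookkeeping of the claim that $L_Y$ maps sections of the line field $A$ into $A$. One must be slightly careful that $A$ is only a line \emph{field} and not a priori globally framed, but this is a local statement and a local nonvanishing spanning field $X$ always exists, so the argument goes through verbatim. If one prefers an explicit route, one can instead work in the coordinates of proposition~\ref{prop:genLNF}, where $A = \langle \del_p \rangle$ and $Y$ is the contact lift of a rectified field $\del_x$; then $[\del_x, \del_p] = 0$ lies trivially in $\langle \del_p, \del_x\rangle$ modulo the needed terms, making integrability transparent, though the coordinate-free argument above is cleaner and more conceptual.
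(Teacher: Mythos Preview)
Your argument is correct and matches the paper's reasoning: the paper does not give a separate proof of this proposition, but later (in the proof of proposition~\ref{prop:GenSymm}) justifies it with the one-line remark ``since any symmetry $Y$ preserves the axis'', which is exactly the mechanism you spell out in detail via $L_Y X \in A$ and Frobenius.
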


The normal forms in theorem \ref{thm:lnf} are then obtained essentially by applying Cartan's equivalence method to find certain co-frames $\alpha, \tilde\alpha, \beta$ as in \eqref{eq:GenBalStrEq} associated to the pair $(M, \xi, \tilde \xi)$, whose coefficents $h,\tilde h, g$ allow us to constrain possible symmetries, and determine appropriate solutions $\rho$ to \eqref{eq:Ric} which allow us to integrate the structure equations \eqref{eq:GenBalStrEq} to find directly the local normal forms stated above.

\begin{rem}\label{rem:counting}
    {\em In `counting' the symmetries we recall (see, eg, Lecture 6 of \cite{Gard}) that if there is a given co-frame (or $e$-structure) $\omega^j$ on some manifold $M$ (so $\omega^1, ..., \omega^n$ are a basis for $T^*M$) then a symmetry $\fe:M\to M$ preserving this co-frame, $\fe^*\omega^j=\omega^j$, is determined by where it sends a single point (by appropriate application of the Frobenius theorem to the graph in $M\times M$ of such a symmetry). Possible symmetries are thus parametrized by where they send a single point. In particular, the symmetry group of the co-frame has dimension at most $\dim M$. The coefficients $I_{kl}^j:M\to \R$ of the structure equations: $d\omega^j = I_{kl}^j\omega^k\wedge\omega^l$ of this co-frame, and their successive derivatives $dI_{kl}^j = I_{kl;m}^j\omega^m$, ...etc... may obstruct such symmetries, by constraining the image of a point to their common level sets. In case one has that the rank, $r$, of the differentials spanned by these invariants and their successive derivatives is constant around some point $m\in M$, then the (local) symmetries of the co-frame around $m$ have dimension: $\dim M - r$.}
\end{rem}

\begin{rem}
    {\em We also recall that a Ricatti equation: $f' = 1 + 2b f \pm f^2$, has to any pair of solutions, $f_1, f_2$, its general solution, $f$, given through: $\frac{f-f_1}{f-f_2} = ce^\lambda$ for $c' = 0$ constant and $\lambda' = \pm (f_1 - f_2)$. In the case of constant coefficients $b'= 0$, each solution $f$, has an associated solution, $f_*$, through: $f_* \pm b  = \frac{b^2 \mp 1}{f \pm b}$ (and the general solution can be given explicitly if need be).}
\end{rem}

To consider some more particular forms of proposition \ref{prop:genLNF}, we will seperately consider the cases when $\mathcal{S}\equiv 0$ and $\mathcal{S}\ne 0$ (see prop.~\ref{prop:Schwartz}).

\subsection{Vanishing Schwartzian}  We first determine local normal forms when the induced projective structures on the integral curves of the axis coincide ($\mathcal{S} \equiv 0$).  As we have already noted above (corollary \ref{rem:Schwartz}), such pairs already have a rather specific form, in fact:

\begin{prop}\label{prop:genS0}
    Suppose $(M, \xi, \tilde \xi)_\pm$ has $\mathcal{S} \equiv 0$. Then there are local coordinates $(x,y,p)$ with:  $\xi = \{ dy = pdx\}, ~\tilde \xi = \{ dy \pm \frac{b^2}{p + bI} dx = 0\}$,
    for some functions $b(x,y), I(x,y)$.
\end{prop}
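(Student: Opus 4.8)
The plan is to exploit the very restricted shape of $f$ that Corollary~\ref{rem:Schwartz} already hands us: since $\mathcal{S}\equiv 0$, we may take coordinates with $\xi=\{dy=pdx\}$ and $\tilde\xi=\{dy=f\,dx\}$, where
\[ f=\frac{a(x,y)\,p+b(x,y)}{c(x,y)\,p+d(x,y)}, \]
so that along each axis fibre (fixed $(x,y)$, varying $p$) the comparison $p\mapsto f(x,y,p)$ is a M\"obius transformation. The only remaining coordinate freedom is the contact lifts of plane transformations $(x,y)\mapsto (F,G)$ (proof of Proposition~\ref{prop:genLNF}); on the slope variable such a lift acts by the projective map $L\colon p\mapsto (G_x+G_yp)/(F_x+F_yp)$, and because both the new $\xi$- and $\tilde\xi$-slopes are obtained by applying $L$, the function $f$ transforms by conjugation $f\mapsto L\circ f\circ L\inv$. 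The key observation I would make is that, among M\"obius maps in $p$, the target shape $\mp b^2/(p+bI)$ is singled out by the single normalization $f(\infty)=0$ (vanishing leading numerator coefficient); once this is arranged, a projective rescaling and a relabelling give the stated form.

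First I would reduce everything to arranging $f(\infty)=0$. Writing the conjugation condition out, $(L\circ f\circ L\inv)(\infty)=0$ is equivalent to $f\big(L\inv(\infty)\big)=L\inv(0)$, and one computes $L\inv(\infty)=-F_x/F_y$ and $L\inv(0)=-G_x/G_y$, i.e.\ exactly the slopes of the coordinate foliations $\{F=\mathrm{const}\}$ and $\{G=\mathrm{const}\}$. Hence it suffices to choose $\{F=\mathrm{const}\}$ freely (any foliation whose slope avoids, near $m$, the fixed points and the pole of the fibrewise map $f(x,y,\cdot)$) and then to pick $G$ so that the leaves $\{G=\mathrm{const}\}$ have slope equal to $f$ applied to the slope $\s_1:=-F_x/F_y$ of $\{F=\mathrm{const}\}$. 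Concretely this is the linear transport equation $G_x+\psi\,G_y=0$ with $\psi(x,y):=f(x,y,\s_1)$, solvable locally by characteristics; in the resulting coordinates the new $a$ vanishes.

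The load-bearing step is verifying that these two foliations are transverse, so that $(F,G)$ are genuine coordinates. Transversality of $\{F=\mathrm{const}\}$ and $\{G=\mathrm{const}\}$ fails precisely when $\psi=\s_1$, i.e.\ when $\s_1$ is a \emph{fixed point} of the fibrewise M\"obius map $f(x,y,\cdot)$. But $f(x,y,p)=p$ would mean the $\xi$- and $\tilde\xi$-planes at that fibre point project to the same direction and hence coincide, contradicting the transversality of the pair. Since $f(x,y,p)\ne p$ throughout the domain, the characteristics of the transport equation are everywhere transverse to $\{F=\mathrm{const}\}$, the pair $(F,G)$ is a valid coordinate system near $m$, and in it $f(\infty)=0$. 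I expect this fixed-point/transversality check to be the main conceptual point; the rest is algebra.

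Finally, with $a\equiv 0$ we have $f=b/(cp+d)$ with $c\ne 0$ (since $f(\infty)=0\ne\infty$) and $b\ne 0$ (else $\tilde\xi=\{dy=0\}$ would be integrable, not contact). Dividing, $f=(b/c)/\big(p+d/c\big)$; I then choose the sign $\pm$ so that $\mp b/c>0$ is a square, set $\hat b:=\sqrt{\mp b/c}$ and $\hat I:=(d/c)/\hat b$, and obtain $\tilde\xi=\{dy\pm \hat b^{2}/(p+\hat b\hat I)\,dx=0\}$ with $\hat b,\hat I$ functions of $(x,y)$ alone, as claimed (renaming $\hat b,\hat I$ to $b,I$). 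The sign is forced by $\mathrm{sgn}(f_p)=-\mathrm{sgn}(b/c)$ and therefore encodes the relative orientation of the pair. To justify the notation $I$, I would close by noting that this invariant agrees up to sign with the generating invariant: the special-case formula of Proposition~\ref{rem:explFormulasGen} gives $I_{\mathrm{gen}}=\pm(a+d)/\sqrt{|ad-bc|}=\pm\,\mathrm{sgn}(b)\,I$, consistent with $\mathcal{S}=2I'(ds)^2\equiv 0$ from \eqref{eq:SchwartzI'}, which forces the generating invariant to be constant along the axis, i.e.\ a function of $(x,y)$ only.
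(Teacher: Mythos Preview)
Your argument is correct and reaches the same normal form, but by a genuinely different route from the paper. The paper proceeds intrinsically via Proposition~\ref{prop:IntDist}: it picks a non-constant solution $\rho$ of the Riccati equation~\eqref{eq:Ric}, uses integrability of $\ker(\alpha+\rho\tilde\alpha)$ to produce the coordinate $x$, then invokes the companion solution $\rho_*$ (available because $I'\equiv 0$) to produce $y$, and finally solves for the contact forms. You instead work directly in the coordinate picture $f=(ap+b)/(cp+d)$ from Corollary~\ref{rem:Schwartz} and normalize by conjugating the fibrewise M\"obius map under a contact lift, reducing the problem to the single condition $f(\infty)=0$, which you achieve by a linear transport equation. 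The two approaches are related under the hood: your foliations $\{F=\mathrm{const}\}$ and $\{G=\mathrm{const}\}$, lifted to $M$, are rank-two integrable distributions containing the axis, and by Proposition~\ref{prop:IntDist} these are exactly kernels $\ker(\alpha+\rho\tilde\alpha)$ for $\rho$ solving~\eqref{eq:Ric}. What the paper's route buys is that the same Riccati/balanced-coframe machinery is reused throughout \S\ref{sec:Comps} (Propositions~\ref{prop:Icst}, \ref{prop:S0Gen}, etc.), so this proposition doubles as a warm-up for that framework; what your route buys is a self-contained argument needing only Corollary~\ref{rem:Schwartz} and elementary PDE.

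One small wrinkle in your transversality paragraph: the assertion ``$f(x,y,p)\ne p$ throughout the domain'' only applies when the point $(x,y,\sigma_1(x,y))$ actually lies in the coordinate domain $U\subset M$, which you have not arranged. The clean fix is to choose $\sigma_1(x_0,y_0)=p_0$ where $m=(x_0,y_0,p_0)$; then $(x,y,\sigma_1(x,y))\in U$ near $m$ by continuity, and transversality of the pair there gives $\psi\ne\sigma_1$ as you want. Alternatively, simply note that the fibrewise M\"obius map is never the identity (since $\xi\ne\tilde\xi$), hence has at most two fixed points on $\mathbb{RP}^1$, and choose $\sigma_1$ to avoid them---which is in fact what you said first, before the slightly redundant second justification.
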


\begin{proof}
For common orientations, $(M, \xi, \tilde \xi)_+$, choose some (local) non constant ($\rho'\ne 0$) solution $\rho$ to \eqref{eq:Ric}. Since $\alpha + \rho\tilde\alpha$ is integrable, we may take locally some function so that $dx \sim \alpha + \rho\tilde\alpha$, or by re-scaling the balanced contact forms by an appropriate integrating factor have:
    \[ dx = \alpha + \rho \tilde\alpha. \]
   Now, from $I' \equiv 0$ (eq.~\eqref{eq:SchwartzI'}), we have another solution $\rho_* = - C + \frac{C^2 - 1}{\rho + C}$ to \eqref{eq:Ric}, for $C = I/2$, and so $\alpha + \rho_* \tilde\alpha$ is integrable, and we may take:
   \[ dy = \mu (-(\rho + C)\alpha + (C\rho + 1) \tilde\alpha) \]
   for some integrating factor $\mu$ (in fact differentiating these equations and using \eqref{eq:GenBalStrEq}, one finds $d\mu\wedge dx\wedge dy = 0$, so that $\mu(x,y)$). Solving for the contact forms we find:
   \[ \mu\rho'\alpha = \mu(C\rho + 1) dx - \rho dy, ~~\mu\rho' \tilde\alpha = \mu(\rho + C) dx + dy\]
   which can be re-arranged in the stated normal form (since $\mu(x,y), C(x,y)$ there is some integrating factor $M(x,y)$ such that locally $M( dy + \mu C dx) = dY$ is exact, then we may take $p = \mu M (I + \frac{1}{\rho})$, and $b = -\mu M$). The oppositely oriented case can be handled analogously using $\rho_* = C + \frac{C^2 + 1}{\rho - C} = \frac{C\rho + 1}{\rho - C}$.
\end{proof}

To study in more detail the pairs with $\mathcal{S}\equiv 0$, we will introduce some more special co-frames than \eqref{eq:GenBalStrEq}: 

\begin{dfn}\label{def:SchwCofr}
    {\em For $(M, \xi, \tilde \xi)_\pm$ with $\mathcal{S}\equiv 0$, we have to each (local) non-constant solution $\rho$ of \eqref{eq:Ric}, the {\em induced} (local) co-frames:
    \[ \alpha, \tilde\alpha, \beta = d\rho/\rho' ~~~~~ dx, \eta, d\rho \]
    where $\alpha, \tilde\alpha$ are balanced contact forms scaled so that $\alpha + \rho \tilde\alpha = dx$ is (locally) exact. We take 
    \[\eta := (\rho \pm C)\alpha \mp (C\rho + 1)\tilde\alpha\] for $C= I/2$ as an integrable one-form associated to the additional solution $\rho_* = \mp \frac{C\rho + 1}{\rho \pm C}$ of \eqref{eq:Ric}. Given $\rho$, such balanced 1-forms ($\alpha, \tilde\alpha$), as well as the pair ($dx, \eta$), are determined up to simultaneous scalings by functions $\lambda(x)$ depending only on $x$.}
\end{dfn}

For such induced co-frames we compute:

\begin{prop}[structure equations]
    For $(M, \xi, \tilde \xi)_\pm$ with $\mathcal{S}\equiv 0$,  let $\rho$ be some local non-constant solution to \eqref{eq:Ric} with corresponding induced co-frames  from definition \ref{def:SchwCofr}. Then these co-frames satisfy:
    \begin{equation}\label{eq:StrEqsRho}
         d\alpha = -\rho h \alpha\wedge \tilde\alpha + \tilde\alpha \wedge\beta  \mp \rho \beta\wedge\alpha , ~d\tilde\alpha =  h \alpha\wedge \tilde\alpha + (I \pm \rho ) \tilde\alpha \wedge\beta \pm \beta\wedge\alpha , ~d\beta = - \frac{\rho}{\rho'} dI\wedge \beta.
    \end{equation}
    \begin{equation}\label{eq:StrEqsInt}
        d\eta = \left( h dx + \frac{\rho}{\rho'} dI\right) \wedge\eta - \frac{\rho^2\mp 1 }{2\rho'} dI\wedge dx
    \end{equation}
    
\end{prop}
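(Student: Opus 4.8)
The plan is to obtain both sets of equations by specializing the general balanced structure equations \eqref{eq:GenBalStrEq} to the induced co-frame, using only three facts: that $\rho$ solves the Riccati equation \eqref{eq:Ric}, that $\beta = d\rho/\rho'$ (so $d\rho = \rho'\beta$), and that $\alpha + \rho\tilde\alpha = dx$ is exact. First I would record that the triple is genuinely a co-frame: since $\rho$ is non-constant we have $\rho'\ne 0$, the forms $\alpha,\tilde\alpha$ are independent and annihilate the axis $X$, whereas $\beta(X) = d\rho(X)/\rho' = 1$, so $\beta\notin\mathrm{span}(\alpha,\tilde\alpha)$. The only unknowns in \eqref{eq:GenBalStrEq} are then the functions $h,\tilde h,g$, and the claim is that the normalizations defining the co-frame pin down $\tilde h$ and $g$. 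To see this I would differentiate $dx = \alpha + \rho\tilde\alpha$ and substitute $d\rho = \rho'\beta$ to get $0 = d\alpha + \rho'\beta\wedge\tilde\alpha + \rho\,d\tilde\alpha$; plugging in \eqref{eq:GenBalStrEq} and matching coefficients of the independent $2$-forms $\alpha\wedge\tilde\alpha,\ \tilde\alpha\wedge\beta,\ \beta\wedge\alpha$ produces three scalar relations.

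\textbf{Proof of \eqref{eq:StrEqsRho}.} The $\alpha\wedge\tilde\alpha$ and $\beta\wedge\alpha$ relations give at once $\tilde h = -\rho h$ and $g = \mp\rho$, while the $\tilde\alpha\wedge\beta$ relation reads $1 - \rho' + \rho(I-g) = 0$; inserting $g = \mp\rho$ turns this into the identity $\rho' = 1 + I\rho \pm \rho^2$, i.e. exactly the Riccati equation \eqref{eq:Ric} satisfied by $\rho$, so it holds automatically and serves only as a consistency check (the shadow of Proposition \ref{prop:IntDist}). Feeding $\tilde h = -\rho h$ and $g=\mp\rho$ (so $I-g = I\pm\rho$) back into \eqref{eq:GenBalStrEq} yields the first two equations of \eqref{eq:StrEqsRho}. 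For the third I would exterior-differentiate $\beta = d\rho/\rho'$ to obtain $d\beta = -\rho'^{-1}\,d\rho'\wedge\beta$, then differentiate the Riccati identity to get $d\rho' = \rho\,dI + (I\pm 2\rho)\,d\rho = \rho\,dI + (I\pm 2\rho)\rho'\beta$; wedging with $\beta$ annihilates the $\beta$-term and leaves $d\beta = -\tfrac{\rho}{\rho'}\,dI\wedge\beta$.

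\textbf{Proof of \eqref{eq:StrEqsInt}.} Writing $\eta = P\alpha + Q\tilde\alpha$ with $P = \rho\pm C$, $Q = \mp(C\rho+1)$ and $C = I/2$, I would compute $d\eta = dP\wedge\alpha + dQ\wedge\tilde\alpha + P\,d\alpha + Q\,d\tilde\alpha$ directly from \eqref{eq:StrEqsRho}, using $dP = \rho'\beta \pm \tfrac12 dI$ and $dQ = \mp\bigl(\tfrac{\rho}{2}\,dI + C\rho'\beta\bigr)$. The bookkeeping is organized by two algebraic identities, $Q - \rho P = \mp\rho'$ and $Q + \rho P = \rho^2 \mp 1$, both consequences of the Riccati equation together with $2C = I$: with their help the coefficients of $\tilde\alpha\wedge\beta$ and of $\beta\wedge\alpha$ cancel identically, the $\alpha\wedge\tilde\alpha$ coefficient collapses to $\mp h\rho'$, and only two $dI$-terms survive. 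I would then pass to the $(dx,\eta)$ co-frame by inverting the linear system $dx = \alpha + \rho\tilde\alpha$, $\eta = P\alpha + Q\tilde\alpha$ (whose determinant is $Q - \rho P = \mp\rho'$): this gives $\alpha\wedge\tilde\alpha = \mp\rho'^{-1}\,dx\wedge\eta$, so the $\alpha\wedge\tilde\alpha$ part becomes $h\,dx\wedge\eta$, and re-expressing the two $dI$-terms assembles precisely into $\bigl(h\,dx + \tfrac{\rho}{\rho'}dI\bigr)\wedge\eta - \tfrac{\rho^2\mp 1}{2\rho'}\,dI\wedge dx$.

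\textbf{Main obstacle and a consistency check.} The only real difficulty is the sign-bookkeeping in this last computation: every product must carry the simultaneous $\pm/\mp$ coming from the two orientation cases through to the end, and the decisive cancellations of the $\tilde\alpha\wedge\beta$ and $\beta\wedge\alpha$ coefficients are exactly the statement that $\rho_* = \mp\tfrac{C\rho+1}{\rho\pm C}$ is again a solution of \eqref{eq:Ric}. This last fact requires the coefficient $C = I/2$ to be constant along the axis, i.e. $I' = 0$, which by \eqref{eq:SchwartzI'} is equivalent to the standing hypothesis $\mathcal{S}\equiv 0$. As a final sanity check I would note that $\mathcal{S}\equiv 0$ forces $dI$ to annihilate $X$ (since $dI(X) = I' = 0$), whence $dI \in \mathrm{span}(\alpha,\tilde\alpha) = \mathrm{span}(dx,\eta)$; therefore $dI\wedge dx\wedge\eta = 0$ and the seemingly non-integrable last term of \eqref{eq:StrEqsInt} wedges to zero against $\eta$, consistent with $\ker\eta$ being integrable by Proposition \ref{prop:IntDist}.
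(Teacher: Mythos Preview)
Your proposal is correct and follows precisely the route indicated in the paper's proof: obtain \eqref{eq:StrEqsRho} from the general balanced equations \eqref{eq:GenBalStrEq} by imposing $d(\alpha+\rho\tilde\alpha)=0$ (which pins down $\tilde h=-\rho h$, $g=\mp\rho$ and recovers the Riccati relation as the remaining compatibility), and then derive \eqref{eq:StrEqsInt} by directly differentiating $\eta$ from Definition~\ref{def:SchwCofr} using \eqref{eq:StrEqsRho}. Your added details---the explicit identities $Q-\rho P=\mp\rho'$, $Q+\rho P=\rho^2\mp 1$ organizing the cancellations, and the consistency check via $I'=0$---are all accurate and make the ``direct computation'' of the paper fully explicit.
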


\begin{proof}
    The structure equations \eqref{eq:StrEqsRho} are a direct computation from \eqref{eq:GenBalStrEq} using that $\alpha + \rho\tilde\alpha$ is closed, while \eqref{eq:StrEqsInt} follows by direct differentiation of $\eta$ from definition \ref{def:SchwCofr} (and using \eqref{eq:StrEqsRho}).
\end{proof}

In the special case that the generating invariant $I$ (definition \ref{def:BalGenInv}) is constant:

\begin{prop}\label{prop:Icst}
    Suppose $(M, \xi, \tilde \xi)_\pm$ has generating invariant $I$ constant. Then in some local coordinates we have:
    \[ \xi = \{ dy = pdx\}, ~~\tilde \xi = \{ dy \pm \frac{dx}{p + I} = 0 \}. \]
\end{prop}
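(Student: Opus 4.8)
The plan is to reduce to Proposition \ref{prop:genS0} and then eliminate the leftover coordinate function. First I would note that $I$ constant forces $I'=0$, so by \eqref{eq:SchwartzI'} the Schwartzian invariant vanishes, $\mathcal{S}=2I'(ds)^2\equiv 0$. Hence Proposition \ref{prop:genS0} applies and furnishes local coordinates $(x,y,p)$ with $\xi=\{dy=pdx\}$ and $\tilde\xi=\{dy\pm\frac{b^2}{p+bI}dx=0\}$ for some function $b=b(x,y)$ and the given constant $I$. It therefore only remains to normalize $b\equiv 1$.

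Next I would pin down exactly how these coordinates differ from the target. Writing $\rho=-b/(p+bI)$ for the distinguished Riccati solution used in the proof of Proposition \ref{prop:genS0} (so that $p=\mu M(I+1/\rho)$ and $b=-\mu M$), one checks directly that $p=-b(I+1/\rho)=b\,p_0$, where $p_0:=-(I+1/\rho)$ is precisely the fiber coordinate of the model $\tilde\xi=\{dy\pm\frac{dx}{p_0+I}=0\}$ at the same value of $\rho$. Thus the sole discrepancy between the coordinates produced by Proposition \ref{prop:genS0} and the desired normal form is the fiberwise rescaling $p\mapsto p/b$. By Proposition \ref{prop:genLNF} such a transformation is the contact lift of a plane diffeomorphism $(x,y)\mapsto(X(x,y),Y(x,y))$ exactly when $P=(Y_x+Y_yp)/(X_x+X_yp)=p/b$; matching powers of $p$ shows this holds iff $X_y=Y_x=0$ and $bY'(y)=X'(x)$, i.e.\ iff $b$ is multiplicatively separable, $b(x,y)=X'(x)/Y'(y)$, in which case the lift of $X=X(x),\,Y=Y(y)$ places the pair in the stated form. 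The oppositely oriented case is identical, using the partner solution $\rho_*=\frac{C\rho+1}{\rho-C}$ with $C=I/2$ and the corresponding $\mp$ signs.

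The main obstacle is therefore to arrange $b$ in separable form (after which the elementary step above delivers the model). This is a genuine gauge normalization: $b$ is not an invariant of the pair, the true data here being $\mathcal{S}\equiv 0$ together with $I\equiv\mathrm{const}$. I would carry it out by exploiting the freedom in the integrating factors $\mu,M$ of Proposition \ref{prop:genS0}, where $b=-\mu M$. With $I$, hence $C=I/2$, constant, the closedness requirement $d\!\left[M(dy+\mu C\,dx)\right]=0$ decouples the $dx$-contribution, since $d(C\,dx)=0$, and I expect this to be exactly the integrability condition that makes the first-order system for the normalizing diffeomorphism solvable.

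The hard part will be confirming this solvability: in the general (non-separable) situation one must solve a determined first-order system for $(X,Y)$ — concretely, prescribing the values of the quadratic form $dx^2-bI\,dx\,dy\pm b^2dy^2$ on $dX$ and $dY$ together with the Jacobian $X_xY_y-X_yY_x$ — and the crux is verifying that constancy of $I$ (i.e.\ $I'=0$) is precisely what makes its compatibility condition hold; with $I'\neq 0$ one would instead land in types III--IV of Theorem \ref{thm:lnf} rather than in a maximally symmetric normal form. Should a direct construction prove unwieldy, the same conclusion follows by the equivalence-method count of Remark \ref{rem:counting}: both the given pair and the model share the identical complete set of invariants ($\mathcal{S}\equiv 0$, the same constant $I$, and all higher invariants determined by the maximal symmetry), so they are locally diffeomorphic, and the diffeomorphism may be taken to be a contact lift fixing $\xi$ and the axis.
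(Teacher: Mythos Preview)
Your reduction via Proposition~\ref{prop:genS0} is sound, and you have correctly isolated the remaining task: to show that the function $b(x,y)$ can be made (multiplicatively) separable, equivalently constant. But you do not actually do this. The paragraph beginning ``The hard part will be confirming this solvability'' is an outline of intentions, not an argument, and the fallback to Remark~\ref{rem:counting} does not close the gap. That remark applies to an $e$-structure (a canonical coframe) on $M$; here there is none. The balanced coframe $(\alpha,\tilde\alpha,\beta)$ of \eqref{eq:GenBalStrEq} is only determined up to simultaneous scalings $\alpha,\tilde\alpha\mapsto\lambda\alpha,\lambda\tilde\alpha$ and shifts $\beta\mapsto\beta+b_1\alpha+b_2\tilde\alpha$, and the coefficients $h,\tilde h,g$ are gauge-dependent, not invariants. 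Asserting that ``all higher invariants are determined by the maximal symmetry'' is precisely what has to be proved: one must exhibit a gauge in which the non-invariant structure functions vanish.

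The paper's proof does exactly this, and in a way your sketch does not anticipate. It first separates off the hyperbolic case (oppositely oriented, or commonly oriented with $I^2>4$), where the Riccati equation \eqref{eq:Ric} has two \emph{constant} real solutions $k,-1/k$ (resp.\ $k,1/k$); using these directly as $\rho,\rho_*$ one gets $dx=\alpha+k\tilde\alpha$, $dy=\mu(\alpha\mp\tfrac{1}{k}\tilde\alpha)$ and lands on $\{dy=pdx\},\{dy=\mp k^2pdx\}$ with no PDE to solve. The genuinely hard case is the elliptic/parabolic one ($(M,\xi,\tilde\xi)_+$ with $I^2\le4$), where no constant Riccati solutions exist. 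There the paper takes an arbitrary non-constant solution $\rho$ with induced coframe $(dx,\eta,d\rho)$ and structure equation $d\eta=h\,dx\wedge\eta$, proves a Lemma computing how $h$ transforms under the change $\rho\mapsto\tilde\rho=\dfrac{\rho+c(1+C\rho)}{1-c(\rho+C)}$ to another Riccati solution, and then shows that the resulting first-order quasilinear system for the unknowns $c(x,y),M(x,y)$ making $\tilde h\equiv0$ is in Cauchy--Kovalevskaya form, hence locally solvable. That computation is the substance you are missing; your separability condition on $b$ is equivalent to $h\equiv0$ for the corresponding $\rho$, and showing one can always achieve it is not automatic.
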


\begin{proof} First, observe that if we have $\xi = \{ dy = pdx\}, \tilde\xi = \{ dy = \frac{ap + b}{cp +d}dx\}$ with $a,b,c,d$ constants, then one can take a linear change of the $xy$ coordinates in order to have the stated form $\{ dy = pdx\}, \{ dy \pm \frac{dx}{p+I} = 0\}$ for $I\equiv cst.$. So, to establish the stated local normal form, it suffices to show that when $I\equiv cst.$ we can write locally $\xi = \{ dy = pdx\}, \tilde\xi = \{ dy = \frac{ap + b}{cp +d}dx\}$, with $a,b,c,d$ constants.

For oppositely oriented pairs $(M, \xi, \tilde \xi)_-$, the associated Ricatti equation \eqref{eq:Ric} is {\em hyperbolic}: having two real roots $k, -1/k$ of $k^2 - Ik -1 = 0$ (so $I = k - \frac{1}{k}$). Taking some balanced contact forms so that locally we have $dx = \alpha + k \tilde\alpha, dy = \mu (\alpha - \frac{1}{k}\tilde\alpha)$ for $\mu$ some integrating factor, we find the desired form $\{ dy = pdx\}, \{ dy = - k^2pdx\}$ for $p = -\mu/k^2$. Similarly one can treat the case $(M,\xi,\tilde\xi)_+$ with $I^2 >4$, arriving to $dy = pdx, ~dy = k^2 p dx$ where $I = k + \frac{1}{k}$.

When the associated Ricatti equation is elliptic or parabolic (namely, for $(M, \xi, \tilde\xi)_+$ with $I^2 \le 4$) we do not have a pair of constant solutions to \eqref{eq:Ric} at our disposal, and will need to make some more detailed computations in order to establish the stated normal form. We note that the character (hyperbolic or elliptic) of the Ricatti equation is related to the contact circles and contact hyperbolas of \cite{Geig}, \cite{Perrone}.

Let us take then locally a non-constant solution $\rho$ to \eqref{eq:Ric} with induced local co-frame (definition \ref{def:SchwCofr}) $dx, \eta, d\rho$ with structure equation eq.~\eqref{eq:StrEqsInt} reading (for $I = cst.$) as:
\[ d\eta = hdx\wedge \eta. \]
Note that if $h \equiv 0$, then we may take locally $dx = \alpha + \rho\tilde\alpha, dy =  \eta = (\rho + C)\alpha - (C\rho + 1)\tilde\alpha$ where $C = I/2$ is constant, and solving for the contact forms we arrive to the desired form of $\alpha\sim dy - pdx, ~\tilde\alpha \sim dy = \frac{ap + b}{cp+d}dx$ with $a,b,c,d$ constants. Thus, for this $(M, \xi, \tilde\xi)_+$ case with $I^2 \le 4$, we are reduced to showing that there exists locally some non-constant solution of \eqref{eq:Ric} with associated structure equation \eqref{eq:StrEqsInt} having $h \equiv 0$ in order to establish our stated normal form. The existence of such a solution will follow from:

\begin{lem}
    For $I\equiv cst.$ and $(M, \xi, \tilde\xi)_+$, let $\rho$ be some local non-constant solution to \eqref{eq:Ric}, with induced local co-frame  $dx,  \eta, d\rho$ and structure equation $d\eta = h dx\wedge\eta$, where $h = - \mu_x/\mu$, for $\mu\eta = dy$. Then, for a general solution of \eqref{eq:Ric}
    \[ \tilde\rho = \frac{\rho + c(1+C\rho)}{1 - c(\rho + C) } \]
    where $c(x,y)$ is an arbitrary function with $c' \equiv 0$, and $C= I/2$ is constant, having induced local co-frame 
    \[ d\tilde x, \tilde \eta, d\tilde\rho\]
    we have its structure equation $d\tilde\eta = \tilde h d\tilde x\wedge \tilde\eta$ where:
    \[ \tilde h = \frac{M_x + hM + \mu (C^2 - 1)(cM)_y}{M^2 (1 + c(1-C^2))} \] 
    where $M(x,y)$ satisfies: 
    \[ M_y + \left( \frac{cM}{\mu } \right)_x = 0.\]
\end{lem}

\begin{proof}
    We have: $\alpha + \tilde\rho \tilde\alpha = \frac{dx - c\eta}{1 - c(\rho +C)}$, so that $\theta + \tilde\rho \tilde \theta = d\tilde x$ for $(\theta,\tilde \theta) = \Lambda(\alpha, \tilde\alpha)$, and $\Lambda = M ( 1 - c(\rho + C))$, and $M(x,y)$ satisfying: $M_y + \left( \frac{cM}{\mu } \right)_x = 0$ where $\mu\eta = dy$. Then, directly differentiating $\frac{\tilde\eta}{M} = c(1 - C^2)dx + \eta$, we find the stated expression for $\tilde h$.
\end{proof}

From the Lemma, our result follows, since given $\rho$ with its resulting $h(x,y)$ and $\mu(x,y)$ through $\mu_x = - h\mu$, we may take $M(x,y), c(x,y)$ solutions of the system of quasi-linear 1st order PDE's: $M_y = - \left( \frac{cM}{\mu } \right)_x$ and $c_y = \frac{Mh + M_x}{\mu M(1-C^2)} + \frac{c}{M}\left(\frac{cM}{\mu}\right)_x$, which are in Cauchy-Kovalevskaya form, so admit (local) solutions (if $C = 1$, one can take $M = \mu$ and $c(x,y)$ solving $c_x = - \mu_y$). The resulting solution $\tilde\rho = \frac{\rho + c(1+C\rho)}{1 - c(\rho + C) }$ then has induced local co-frame with $\tilde h = 0$  so that $d\tilde x = \alpha + \tilde\rho \tilde\alpha, d\tilde y = \tilde\eta$ as desired.
\end{proof}

Otherwise, when $I$ is not constant, we have an additional invariant:
\begin{dfn}\label{def:Jinv}
    {\em For $(M, \xi, \tilde \xi)$ with $\mathcal{S} \equiv 0$ and $dI\ne 0$, there is an induced function:
    \[ \mathcal{J}: M \to \mathbb{RP}^1, ~~~\mathcal{J} = (j_1: j_2) \]
    by writing $dI = j_1\alpha + j_2 \tilde\alpha$, for some balanced contact forms. When $j_1\ne 0$, we set: $J = j_2/j_1$.}
\end{dfn}

For this case, $\mathcal{S}\equiv 0, dI\ne 0$, we have then always $j_1\ne 0$ or $j_2\ne 0$ and will without loss of generality consider that $j_1\ne 0$, up to a re-labeling of the pairs of contact structures. Then, we find the more refined form of proposition \ref{prop:genS0}:

\begin{prop}\label{prop:S0Gen}
    For $(M, \xi, \tilde \xi)_\pm$ with $\mathcal{S} \equiv 0$ and $dI\ne 0$ then
    \begin{enumerate}
        \item in case $J' \ne 0$, there are local coordinates $(x, I, J)$ in which:
        \[ \xi = \{ dI = BJ dx\}, ~~~\tilde \xi = \{ dI \pm \frac{Bdx}{I+J} = 0\} \]
        for some function $B(I,x) > 0$. The pair admits exactly one  symmetry iff $B$ is separable: $\del_I\del_x \log B \equiv 0$ (ie, its invariant $G$, defined below, \eqref{eq:G}, vanishes).

        \item in case $J' \equiv 0$ there are local coordinates $(x, I, p)$ in which: 
        \[ \xi = \{ dI = p dx \}, ~~\tilde \xi = \{ \pm J^2 dI =  p dx \} \]
        where $J(I)$ is a root of $1 + IJ \pm J^2 = 0$.

    \end{enumerate}
    
\end{prop}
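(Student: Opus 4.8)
The plan is to read off everything from the single relation $dI=j_1\alpha+j_2\tilde\alpha$ that $\mathcal S\equiv 0$ affords. Indeed, by \eqref{eq:SchwartzI'} the hypothesis $\mathcal S\equiv 0$ gives $I'\equiv 0$, so $dI$ has no $\beta$-component and the level surfaces of $I$ are exactly the integral surfaces of $\ker(\alpha+J\tilde\alpha)$, with $J=j_2/j_1$ as in Definition \ref{def:Jinv}. Since these surfaces are tangent to the axis and integrable, Proposition \ref{prop:IntDist} (or a direct Frobenius computation from \eqref{eq:GenBalStrEq}) forces $J$ to be a solution of the Riccati equation \eqref{eq:Ric}, i.e. $J'=1+IJ\pm J^2$. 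This one observation drives both cases.

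First I would dispatch case (2), $J'\equiv 0$: the Riccati identity then reads $1+IJ\pm J^2\equiv 0$, so $J$ is the asserted root, and since $I'\equiv 0$ forces $J$ to be a function of $I$ alone we get $J=J(I)$. Taking the generating invariant $I$ itself as the ``$y$''-coordinate and rectifying the axis to $\langle\del_p\rangle$, Corollary \ref{rem:Schwartz} makes both slope functions M\"obius (in fact linear) in $p$; normalizing so that the generating invariant equals the coordinate $I$ and the $J$-invariant equals $J(I)$ then pins the two slopes to $p$ and $\pm p/J^2$, which is the stated form $\xi=\{dI=p\,dx\}$, $\tilde\xi=\{\pm J^2\,dI=p\,dx\}$. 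This is a short coordinate computation.

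The substance is case (1), $J'\neq 0$. Here $J$ is a genuinely non-constant solution of \eqref{eq:Ric}, so I would feed $\rho=J$ into Definition \ref{def:SchwCofr} and exploit the structure equations \eqref{eq:StrEqsRho}. The first integrable form $\alpha+J\tilde\alpha$ is proportional to $dI$ (its leaves are the level surfaces of $I$), giving one coordinate $I$; the partner form $\eta$ attached to $\rho_*$ is integrable and furnishes a transverse coordinate $x$ with $\eta\sim dx$ (independent of $I$ since $\rho_*\neq J$); and $J$, being non-constant along the axis, is the third coordinate. Solving the linear system for $\alpha,\tilde\alpha$ in the co-basis $dI,dx,dJ$ and integrating \eqref{eq:StrEqsRho}, every structure coefficient is absorbed except a single positive function; tracking the residual simultaneous scaling $\lambda(x)$ permitted by Definition \ref{def:SchwCofr}, that surviving datum is exactly a $B(I,x)>0$, and one reads off $\xi=\{dI=BJ\,dx\}$, $\tilde\xi=\{dI\pm\frac{B}{I+J}dx=0\}$. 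Running the computation of Proposition \ref{rem:explFormulasGen} backwards on this form confirms its generating invariant is $I$ and its $J$-invariant is $J$, so the chosen coordinates really are the invariants.

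For the symmetry count I would note that any infinitesimal symmetry $Y$ preserves the canonically associated functions $I$ and $J$, whence $YI=YJ=0$ and therefore $Y=\phi\,\del_x$ in these coordinates. Imposing that $Y$ preserve $\xi,\tilde\xi$ (via \eqref{eq:symm}, equivalently $L_Y\alpha\wedge\alpha=0=L_Y\tilde\alpha\wedge\tilde\alpha$) gives an overdetermined first-order system for $\phi$; in the language of Remark \ref{rem:counting}, the only structure function not already a function of $I,J$ is, after using up the scaling freedom $\lambda(x)$, the reparametrization-invariant combination $G=\del_I\del_x\log B$. Thus the rank $r$ of the invariants is $2$ precisely when $G\equiv 0$, i.e. when $B$ is separable, and then the local symmetry algebra has dimension $\dim M-r=1$; otherwise $r=3$ and there are none. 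I expect the main obstacle to be exactly this last step: disentangling the $\lambda(x)$-ambiguity from genuine geometry so as to certify that $G=\del_I\del_x\log B$ is the one honest obstruction, and that separability produces \emph{exactly one} symmetry rather than merely permitting one.
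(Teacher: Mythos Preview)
Your plan is essentially the paper's proof: scale so $dI=\alpha+J\tilde\alpha$, observe $J$ solves the Riccati equation, and in case (1) feed $\rho=J$ into the induced coframe of Definition~\ref{def:SchwCofr} to obtain coordinates $(I,x,J)$, while in case (2) use $J'\equiv0$ to read off $J=J(I)$ algebraically.

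Two points where the paper is sharper than your sketch. For case (2) you propose to take $I$ as the $y$-coordinate and then ``normalize'' to pin down the second slope; the paper instead observes that when $J'\equiv0$ the Riccati polynomial $1+IJ\pm J^2$ has a \emph{second} constant root $\mp 1/J$, so one has immediately two integrable distributions $\alpha+J\tilde\alpha\sim dI$ and $J\alpha\mp\tilde\alpha\sim dx$, and solving this $2\times 2$ system for $\alpha,\tilde\alpha$ gives the normal form directly. Your normalization argument is vaguer and would need more work to close. For the symmetry count in case (1), the difficulty you anticipate (disentangling the $\lambda$-ambiguity) largely evaporates once you notice that the normalization $dI=\alpha+J\tilde\alpha$ already kills the simultaneous-scaling freedom: one then has a \emph{canonical} coframe $dI,\eta,dJ$ on $M$ with a single structure equation $d\eta=H\,dI\wedge\eta$ (from \eqref{eq:StrEqsInt} with $x=I$), and $dH=F\,dI+G\eta$ defines $G$ invariantly. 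The rank argument of Remark~\ref{rem:counting} then goes through cleanly: $G\ne0$ gives three independent invariants $H,I,J$ and no symmetries, while $G\equiv0$ forces $H=H(I)$, whence the integrating factor $\mu$ with $\mu\eta$ exact can be taken as $\mu(I)$ alone, which is exactly the separability of $B$. So the obstruction you expect is real but its resolution is more structural than you suggest.
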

\begin{proof}
    We will consider the commonly oriented case $(M, \xi, \tilde \xi)_+$ (the oppositely oriented case is handled in the same way), and scale our balanced contact forms so that $dI = \alpha + J\tilde\alpha$. Then $J$ is a solution to \eqref{eq:Ric}: $J' = 1 + IJ + J^2$.
In case $J'\ne 0$, we then have a definite co-frame on $M$:
\[ dI, \eta, dJ \]
where we take $\eta = (J + \frac{I}{2})( \alpha + J_* \tilde\alpha)$
as the integrable distribution associated to the solution: $J_* = - \frac{CJ + 1}{J+C}$ for $C = I/2$ of \eqref{eq:Ric}. Since we have in this case a co-frame on $M$ and at least two non-trivial invariants ($I,J$), we have by remark \ref{rem:counting} that {\em such pairs admit at most a one-dimensional local symmetry group}.  The structure equations \eqref{eq:StrEqsInt}, with $x = I$ and $\rho = J$ are then:
\[ d\eta = H dI \wedge\eta \]
where the coefficient $H$, having $H' = 0$, is another invariant, as well as its derivatives along this frame:
\begin{equation}\label{eq:G}
    dH = F dI + G \eta
\end{equation} 
being further invariants. If $G \ne 0$, then $H, I, J$ form local coordinates in invariants and such a pair admits {\em no} infinitesimal symmetries.
In any case, since $\eta$ is integrable, we may always choose locally some function $I_*$ with
\[ dI_* = \mu\eta,\]
by taking some integrating factor $\mu(I, I_*)$ satisfying $\del_I\mu = - H\mu$ (use $d\eta = HdI\wedge \eta$ and $H' = 0$). The contact distributions in these coordinates $(I_*, I, J)$ are then given by:
\[ dI_* + \mu (C + \frac{1}{J}) dI = 0, ~~\mu(C+J)dI = dI_*, ~~~~C = I/2. \]
Which we may put it in the stated normal form of the 1st item by taking another integrating factor $M( dI_* + \mu C dI) = - dx$
and setting $B = M\mu$. 
Note that if the pair is to admit exactly one symmetry (see remark \ref{rem:counting}), we must have $G = 0$, so that $dH = FdI$ and $H(I)$ is a function only of $I$, and then as well our integrating factor $\mu(I)$ (solving $\del_I\mu = - H(I)\mu$) can be taken as a function only of $I$ and the coefficient $B$ above is separable (we may take $B(I) = \mu(I)$). 

 Otherwise, when $J' = 1 + IJ \pm J^2 \equiv 0$, then $J(I)$ is a function of $I$. In the oppositely oriented case, we then have two integrable distributions $dI = \alpha + J\tilde\alpha$ and $dx = \mu ( J\alpha - \tilde\alpha)$, whereas in the commonly oriented case, necessarily  $I^2 > 4$ in order to have $J'\equiv 0$ and $dI\ne 0$, and we have the two integrable distributions $dI = \alpha + J\alpha$ and $dx = \mu ( J\alpha + \tilde\alpha)$. Solving for the contact distributions in either case yields the stated normal forms of the 2nd item when $J' \equiv 0$.
\end{proof}

\begin{prop}[Explicit formulas]\label{rem:S0Sum}
    In local coordinates $\xi = \{ dy = pdx\}, \tilde \xi = \{ dy = f(x,y,p) dx\}$, these invariants can be given more explicitly as follows. First recall proposition \ref{rem:explFormulasGen} where we have given $X, I$ explicitly in such coordinates, as well as a pair of balanced contact forms, $\alpha,\tilde\alpha$ in eq.~\eqref{eq:BalCoords}. Set:
    \begin{equation}\label{eq:BalCoordFrame}
        \mathcal{D} := \frac{\del_x + f\del_y}{f-p}, ~~\mathcal{\tilde D} := \frac{|f_p|^{1/2}}{|p-f|} (\del_x + p\del_y).
    \end{equation}
     When $\mathcal{S}\equiv 0$ and $dI\ne 0$, the invariant $J$ is given by:
    \[ J = \frac{\mathcal{\tilde D} \cdot I}{\mathcal{D}\cdot I}\]
    with $J' = X\cdot J = 1 + IJ\pm J^2$. When $J'\ne 0$, set 
     \[ X_1 := \frac{ J'\mathcal{D} - (\mathcal{D}\cdot J) X}{J' \mathcal{D}\cdot I}, ~~X_2 := \frac{J'\mathcal{\tilde D} - (\mathcal{\tilde D}\cdot J) X}{J' \mathcal{D}\cdot I} \]
     and the invariant $H$ is given by:
    \[ H = - (\mathcal{D}\cdot I) \tilde\alpha ( [X_1, X_2]) + \frac{J}{J'}. \]
    Setting $X_jH = H_j$, the invariants $F, G$ (the condition to admit --exactly one-- symmetry being $G\equiv 0$) are given by:
    \[ J' F = \left( \frac{IJ}{2} + 1\right) H_1 \pm \left( J \pm \frac{I}{2}\right) H_2, ~~~ \pm J' G = J H_1 - H_2.  \]
\end{prop}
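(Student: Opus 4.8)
The plan is to recognize the frame $\{\mathcal{D}, \mathcal{\tilde D}, X\}$ of \eqref{eq:BalCoordFrame} as the frame dual to the balanced coframe $\{\alpha, \tilde\alpha, ds\}$ of \eqref{eq:BalCoords}, and then read off each invariant as an appropriate structure coefficient of this coframe. First I would verify by direct pairing that $\alpha(\mathcal{D}) = \tilde\alpha(\mathcal{\tilde D}) = 1$ while $\alpha(\mathcal{\tilde D}) = \tilde\alpha(\mathcal{D}) = 0$, and that both $\mathcal{D}, \mathcal{\tilde D}$ (being combinations of $\del_x, \del_y$) annihilate $ds$, so that together with $X = \frac{|p-f|}{|f_p|^{1/2}}\del_p$ they form the dual frame. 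Since $\mathcal{S}\equiv 0$ forces $I' = X\cdot I = 0$ by \eqref{eq:SchwartzI'}, the differential $dI = (\mathcal{D}\cdot I)\alpha + (\mathcal{\tilde D}\cdot I)\tilde\alpha$ carries no $ds$-component, and the formula $J = (\mathcal{\tilde D}\cdot I)/(\mathcal{D}\cdot I)$ of Definition \ref{def:Jinv} is immediate, as is $J' = X\cdot J = 1 + IJ \pm J^2$ from the Ricatti equation \eqref{eq:Ric} satisfied by $J$ in the proof of Proposition \ref{prop:S0Gen}.

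For $H$ the key bookkeeping step is to pass from the fixed coframe \eqref{eq:BalCoords} to the rescaled balanced forms of Proposition \ref{prop:S0Gen}, normalized so that $dI = \alpha^{sc} + J\tilde\alpha^{sc}$; comparison with the previous step forces the simultaneous scaling $\alpha^{sc} = \lambda\alpha,\ \tilde\alpha^{sc} = \lambda\tilde\alpha$ with $\lambda = \mathcal{D}\cdot I$. I would then compute the frame dual to the induced coframe $\{\alpha^{sc}, \tilde\alpha^{sc}, \beta^{sc} = dJ/J'\}$ of Definition \ref{def:SchwCofr}: inverting the (triangular) change-of-basis matrix against $\{\mathcal{D}, \mathcal{\tilde D}, X\}$ and using $dJ = (\mathcal{D}\cdot J)\alpha + (\mathcal{\tilde D}\cdot J)\tilde\alpha + J'\,ds$ produces exactly the vector fields $X_1, X_2$ of the statement, with $X$ completing the dual frame. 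The coefficient $h$ of the structure equations \eqref{eq:StrEqsRho} is then recovered by Cartan's formula as $d\tilde\alpha^{sc}(X_1, X_2) = -\tilde\alpha^{sc}([X_1, X_2]) = -(\mathcal{D}\cdot I)\,\tilde\alpha([X_1, X_2])$, the remaining terms of $d\tilde\alpha^{sc}$ dropping out since $\beta^{sc}(X_1) = \beta^{sc}(X_2) = 0$; specializing \eqref{eq:StrEqsInt} to $x = I,\ \rho = J$ (so that the $dI\wedge dI$ term vanishes) gives $H = h + J/J'$, which is the claimed formula.

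Finally, for $F, G$ I would expand $dH$, $dI$ and $\eta$ in the rescaled coframe. Since $H$ is an invariant with $H' = X\cdot H = 0$, one has $dH = H_1\alpha^{sc} + H_2\tilde\alpha^{sc}$ with $H_j = X_j\cdot H$; writing $dI = \alpha^{sc} + J\tilde\alpha^{sc}$ and $\eta = (J\pm C)\alpha^{sc} \mp (CJ+1)\tilde\alpha^{sc}$ with $C = I/2$ (from Definition \ref{def:SchwCofr}), the relation $dH = F\,dI + G\eta$ of \eqref{eq:G} becomes a $2\times2$ linear system for $F, G$ in terms of $H_1, H_2$. Solving it and simplifying the determinant via the Ricatti identity $J^2 \pm IJ \pm 1 = \pm J'$ yields precisely the stated expressions for $J'F$ and $\pm J'G$. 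The only genuine obstacle is the careful tracking of the scaling factor $\lambda = \mathcal{D}\cdot I$ and of the several $\pm$ signs attached to the orientation of the pair; once the dual frame $\{X_1, X_2, X\}$ is correctly identified, every invariant is extracted by a routine structure-coefficient computation.
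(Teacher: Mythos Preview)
Your proposal is correct and follows essentially the same approach as the paper: identify $\{\mathcal{D},\mathcal{\tilde D},X\}$ as the frame dual to the balanced coframe $\{\alpha,\tilde\alpha,ds\}$, then pass to the rescaled coframe $\{\theta,\tilde\theta,dJ/J'\}$ with $dI=\theta+J\tilde\theta$ and read off the invariants from the structure equations \eqref{eq:StrEqsRho}, \eqref{eq:StrEqsInt}, \eqref{eq:G}. The paper's proof is a single sentence stating exactly this duality and referring to those equations; you have correctly supplied the omitted details, in particular the identification $H=h+J/J'$ from specializing \eqref{eq:StrEqsInt} to $x=I,\ \rho=J$, and the linear algebra extracting $F,G$ from $dH=F\,dI+G\eta$.
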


\begin{proof}
    The frame $\mathcal{D}, \mathcal{\tilde D}, X$ is dual to the balanced co-frame $\alpha, \tilde\alpha, ds$ in \eqref{eq:BalCoords}, so that $(j_1 : j_2) = (\mathcal{D}\cdot I: \mathcal{\tilde D}\cdot I)$, and when $J'\ne 0$, then $X_1, X_2, X$ are dual to the co-frame $\theta, \tilde\theta, dJ/J'$ with $dI = \theta + J\tilde\theta$, and we obtain the stated equations from \eqref{eq:StrEqsRho}, \eqref{eq:StrEqsInt}, \eqref{eq:G} for this co-frame.
\end{proof}

\subsubsection{Associated path geometries}\label{rem:S0PG}
   A general pair $(M, \xi, \tilde \xi)_\pm$ with $\mathcal{S} \equiv 0$ and $dI\ne 0, J'\ne 0$ has associated to it a pair of {\em path geometries} (see eg ch.~1, \S 6 of \cite{Arnold}, or \cite{BGH}), where:

   \begin{dfn}
       {\em The geometric structure of a 2nd order ODE or a {\em path geometry} is a 3-manifold $M$ with a pair of line fields $L_1, L_2$ on $M$ spanning a contact distribution $\xi = L_1\oplus L_2$ on $M$.}
   \end{dfn}

   Namely, to a pair $(M, \xi, \tilde \xi)_\pm$ with $\mathcal{S} \equiv 0$ and $dI\ne 0, J'\ne 0$, we associate its {\em induced pair of path geometries}:
   \[ \xi = A\oplus L, ~~\tilde \xi = A\oplus \tilde L \]
   for $L = \xi \cap (\ker dJ), \tilde L = \tilde \xi\cap (\ker dJ)$. The projections of integral curves of $L, \tilde L$ under $M\to M/A$ each locally form a pair of 2-parameter families of curves in $M/A$. In the coordinates of proposition~\ref{prop:S0Gen}, these curves are given as graphs ($(x, I(x))$ or $(x(I), I)$ respectively) of solutions to the 2nd order ode's:
   \begin{equation}\label{eq:PG1}
       \frac{d^2I}{dx^2} = \left(\frac{B_x + B_I\frac{dI}{dx}}{B}\right) \frac{dI}{dx}, ~~~\frac{d^2x}{dI^2} = \mp \frac{1}{B} - \left(\frac{B_I + B_x \frac{dx}{dI}}{B}\right) \frac{dx}{dI}
   \end{equation}
   for a function $B(I, x) > 0$.
   
   \begin{rem}
       {\em Path geometries $(M, \xi = L_1\oplus L_2)$ have two basic (generating) invariants, say $I_1, I_2$. Let us recall here their following properties. The projections of integral curves of $L_1$ under $M\to M/L_2$ are unparametrized geodesics of some affine connection $\nabla_2$ on $M/L_2$ iff $I_2\equiv 0$ (likewise projections of integral curves of $L_2$ under $M\to M/L_1$ are unparametrized geodesics of some affine connection $\nabla_1$ on $M/L_1$ iff $I_1\equiv 0$). The condition $I_2\equiv 0$ on $M/L_2$ is simple to state in local coordinates $(x,y)$ on $M/L_2$: it is equivalent to this family of curves (the projections of integral curves of $L_1$) being given locally by the graphs $(x, y(x))$ of solutions to a 2nd order ode which is at most {\em cubic} in the 1st derivative, ie of the form: $y'' = \alpha(x,y) + \beta(x,y) y' + \gamma(x,y)(y')^2 + \delta(x,y)(y')^3$. Moreover, a relevant fundamental theorem of path geometry is that a path geometry has both $I_1\equiv 0$ and $I_2\equiv 0$ iff it is locally {\em flat}: there are some local coordinates $(x,y)$ on $M/L_2$ such that the projections of integral curves of $L_1$ under $M\to M/L_2$ are given by straight lines in the $xy$-plane. See for example eq.~(5) of \cite{BGH} for an explicit formula for the condition $I_1\equiv 0$ in terms of the 2nd order ODE $y'' = \omega(x,y,y')$ in local coordinates $(x,y)$ on $M/L_2$.}
   \end{rem}

   Since both 2nd order ODE's of \eqref{eq:PG1} are at most quadratic in the 1st derivative, their solutions are unparametrized geodesics of some affine connection. One may check, using for example eq.~(5) of \cite{BGH}) that the pair $(M, \xi, \tilde\xi)$ with $\mathcal{S}\equiv 0, dI\ne 0, J'\ne 0$ admits a symmetry ($B(I,x)$ of proposition \ref{prop:S0Gen} is separable: having $\del_I\del_x \log  B \equiv 0$) exactly when the 2nd order ODE's of \eqref{eq:PG1} are each flat (each equivalent to $y'' = 0$ under some change of coordinates). Curiously, there is no system of coordinates in which the general solution to both 2nd order ODE's \eqref{eq:PG1} are simultaneously rectified to straight lines.

\subsection{Non-zero Schwartzian} Now we consider when the induced projective structures on the axis are distinct ($\mathcal{S}\ne 0$). In this case, we have two invariants:
\[ I, I' \ne 0 \]
and will consider separately when these invariants are dependent $dI\wedge dI'\equiv 0$ and independent $dI\wedge dI'\ne 0$. First, for the dependent case, we have the general normal form:

\begin{prop}\label{prop:DepGenlnf}
    For $(M, \xi, \tilde \xi)$ with $\mathcal{S}\ne 0$ and $I'(I)$ a function of $I$, there are local coordinates in which: $\xi = \{ dy = p e^{c(x,y)}dx \}, ~\tilde \xi = \{ dy = f(p) e^{c(x,y)}dx\}$,
    for some functions $c(x,y)$ and $f(p)$.
\end{prop}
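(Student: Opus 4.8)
The plan is to convert the hypotheses into a statement about the fibre (axis) direction and then integrate, following the pattern of Propositions \ref{prop:genS0}--\ref{prop:Icst}. First, note that $\mathcal{S}\ne 0$ is the same as $I'\ne 0$ by \eqref{eq:SchwartzI'}. The real force of the assumption that $I'$ is a function of $I$ is that the one-form $\beta := dI/I'$ is \emph{closed}: one has $\beta|_A = ds$ since $dI(X) = I'$, while $d\beta = -\tfrac{1}{(I')^2}\,dI'\wedge dI = 0$ exactly because $1/I'$ is a function of $I$. Writing $\beta = d\sigma$ locally, the relation $dI = I'\,d\sigma$ gives $dI\wedge d\sigma = 0$, so $I = I(\sigma)$ is a function of the single parameter $\sigma$, the same along every integral curve of the axis.

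Next I would exploit this to produce two integrable distributions through the axis whose slopes are controlled. I seek solutions $\rho$ of the Riccati equation \eqref{eq:Ric} that are functions of $\sigma$ alone: since $X\sigma = \beta(X) = 1$ and $I = I(\sigma)$, such a $\rho(\sigma)$ satisfies the single ordinary differential equation $\tfrac{d\rho}{d\sigma} = 1 + I(\sigma)\rho \pm \rho^2$, identical along every axis curve, and conversely any solution of this ODE is a $\sigma$-dependent solution of \eqref{eq:Ric}. Choose two distinct such solutions $\rho_1(\sigma)\ne \rho_2(\sigma)$. By Proposition \ref{prop:IntDist} the forms $\alpha + \rho_i\tilde\alpha$ (for balanced $\alpha,\tilde\alpha$) have integrable kernels containing $A$, so after multiplying by integrating factors we obtain first integrals $x,y$ of $X$ with $dx \sim \alpha + \rho_1\tilde\alpha$ and $dy \sim \alpha + \rho_2\tilde\alpha$. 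Solving for $\alpha,\tilde\alpha$ and reading off the slopes exactly as in Proposition \ref{prop:genS0}, the pair becomes $\xi = \{dy = P\,dx\}$, $\tilde\xi = \{dy = Q\,dx\}$ with $Q/P$ equal to a ratio of $\rho_1,\rho_2$ --- hence a function of $\sigma$ alone.

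The main step, and the expected obstacle, is to upgrade this to the stated separated form: that in suitable coordinates $P = A(\sigma)\,e^{c(x,y)}$ and $Q = B(\sigma)\,e^{c(x,y)}$ with a \emph{common} base factor $e^{c}$. Granting this, one sets $p := A(\sigma)$ (a legitimate fibre coordinate, as the contact condition forces $P_\sigma\ne 0$, hence $A'\ne 0$) and reads off $\xi = \{dy = p\,e^{c}dx\}$ together with $Q = (B\circ A^{-1})(p)\,e^{c} =: f(p)\,e^{c}$, which is the claimed normal form. Establishing the common base factor amounts to controlling the two integrating factors above and showing they differ by a function of $(x,y)$ only; this is where the closedness of $\beta = dI/I'$ must be used in full, and it is the analogue --- now with a non-constant but base-independent $I(\sigma)$ --- of the integrating-factor computations carried out in Propositions \ref{prop:Icst} and \ref{prop:S0Gen}. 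The relevant transversality conditions ($\xi,\tilde\xi$ distinct contact structures, i.e.\ $f(p)\ne p$, and $f'(p)\ne 0$) then hold on a neighbourhood of $m$ because $\rho_1\ne\rho_2$ and $P_\sigma\ne 0$ there.
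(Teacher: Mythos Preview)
Your plan is precisely the paper's approach: pick two $I$-dependent solutions of the Riccati equation \eqref{eq:Ric}, use Proposition~\ref{prop:IntDist} to get integrable distributions $\alpha+\rho_i\tilde\alpha$, integrate to obtain base coordinates $x,y$, and then read off the slopes. The only substantive difference is that you leave the ``common base factor'' step as an acknowledged obstacle, whereas this is exactly the computation the paper carries out --- and it is short once set up correctly.

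The missing step is filled as follows. Rather than introducing two integrating factors, absorb one into the balanced pair: rescale $\alpha,\tilde\alpha$ so that $dx=\alpha+\rho\tilde\alpha$ is \emph{exactly} closed. The structure equations \eqref{eq:StrEqsRho} then hold with $\beta=dI/I'$, and since $d\beta=0$ (your observation), differentiating \eqref{eq:StrEqsRho} forces the coefficient $h$ to satisfy $h'=0$, i.e.\ $h=h(x,y)$. Now the second integrable form $\alpha+\tilde\rho\tilde\alpha$ needs an integrating factor $\mu$ with $dy=\mu(\alpha+\tilde\rho\tilde\alpha)$; computing $d(\alpha+\tilde\rho\tilde\alpha)$ from \eqref{eq:StrEqsRho} gives
\[
\frac{d\mu}{\mu}+h\,dx\pm(\tilde\rho-\rho)\,\beta\equiv 0\pmod{dy}.
\]
Since $h$ depends only on $(x,y)$ and $\tilde\rho-\rho$ depends only on $I$, this separates: $\mu=e^{\varphi(I)+c(x,y)}$ with $\varphi(I)=\mp\int\frac{\tilde\rho-\rho}{I'}\,dI$ and $c_x=-h$. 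Solving for $\alpha,\tilde\alpha$ then yields $\alpha\sim dy-\tfrac{\tilde\rho}{\rho}e^{\varphi}e^{c}\,dx$ and $\tilde\alpha\sim dy-e^{\varphi}e^{c}\,dx$; setting $p=\tfrac{\tilde\rho}{\rho}e^{\varphi}$ and $f(p)=e^{\varphi}$ gives the stated form. So the ``common base factor'' is not an extra hypothesis to be granted but a direct consequence of $h'=0$, which in turn comes from $d\beta=0$.
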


\begin{proof} The derivation is essentially the same as that in proposition~\ref{prop:genS0}. 
 We consider two solutions $\rho(I), \tilde\rho(I)$ to the Ricatti equation \eqref{eq:Ric}:
 \[ I'(I) \frac{d\rho}{dI} = 1 + I\rho \pm \rho^2 \]
 and scale some balanced contact forms to have $dx = \alpha + \rho \tilde\alpha$ and $dy = \mu( \alpha + \tilde\rho \tilde\alpha)$. The contact forms $\alpha, \tilde\alpha$ satisfy the structure equations \eqref{eq:StrEqsRho} with $d\beta = 0$, for $\beta = dI/I'$, whose coefficient $h$ has, by differentiating \eqref{eq:StrEqsRho} and using $d\beta = 0$ that: $h' = 0$, so that $h(x,y)$. The integrating factor $\mu$ for $\alpha + \tilde\rho\tilde\alpha$ then satisfies:
 \[ \frac{d\mu}{\mu} + hdx \pm (\tilde\rho - \rho)\beta \equiv 0 \mod dy \]
 and so has the form $\mu = e^{\fe(I) + c(x,y)}$ for $\fe(I) = \mp \int \frac{\tilde\rho(I) - \rho(I)}{I'(I)} dI$ (and $c_x = - h$). Solving for the contact distributions we have $\alpha \sim dy - \frac{\tilde\rho}{\rho}e^\fe e^c dx, ~\tilde\alpha \sim dy - e^\fe e^c dx$, and the stated form above by setting $p = \frac{\tilde\rho(I)}{\rho(I)}e^{\fe(I)}$, $e^{\fe(I)} = f(p)$.
\end{proof}

In case there are symmetries:
\begin{prop}\label{prop:SymmsDepGen}
    If a pair $(M, \xi, \tilde \xi)$ with $\mathcal{S}\ne 0$ and $I'(I)\ne 0$ a function of $I$ admits an infinitesimal symmetry then either
    \begin{enumerate}
        \item there are local coordinates $(x,y,p)$ in which:
        \[ \xi = \{ dy = p dx\}, ~~\tilde \xi = \{ dy = f(p) dx\}, \]
        \item or, there are local coordinates $(x,y,p)$ in which:
    \[ \xi = \{  dy = \frac{p + a(y)}{p + b(y)} dx \} , ~~\tilde \xi = \{ dy = \frac{f(p) + a(y)}{f(p) + b(y)} dx \}. \]
    \end{enumerate}
\end{prop}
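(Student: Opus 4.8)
The plan is to refine the general normal form already produced in proposition \ref{prop:DepGenlnf}, namely $\xi = \{dy = pe^{c(x,y)}dx\}$ and $\tilde\xi = \{dy = f(p)e^{c(x,y)}dx\}$, by using the hypothesis that a symmetry exists to pin down the single remaining function $c(x,y)$. The essential starting observation is that in these coordinates the generating invariant depends on the axis-coordinate alone, $I = I(p)$ with $I_p \neq 0$: this is built into the construction of proposition \ref{prop:DepGenlnf}, where $f(p) = e^{\varphi(I)}$ with $\varphi$ invertible since the two chosen Ricatti solutions satisfy $\rho \neq \tilde\rho$ and $\mathcal{S}\neq 0$. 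First I would determine the form of any infinitesimal symmetry $Y = u\del_x + v\del_y + w\del_p$. Since $Y$ preserves the pair it preserves the axis $A = \langle\del_p\rangle$, which forces $u,v$ to be independent of $p$; and since it preserves the invariant $I = I(p)$, the relation $Y\cdot I = w\,I_p = 0$ forces $w\equiv 0$.

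Imposing that $Y = u\del_x + v\del_y$ preserve $\xi$ (computing $L_Y(dy - pe^{c}dx) = \lambda(dy - pe^{c}dx)$ directly, in the manner of \eqref{eq:symm}) then produces a relation that is polynomial in $p$ with coefficients depending only on $(x,y)$; its vanishing splits into $v_x = 0$, $u_y = 0$, and the single scalar equation
\[ u(x)c_x + v(y)c_y = v'(y) - u'(x). \]
A key bookkeeping point is that, once $\xi$ is preserved by such a $Y$ with $w=0$, the analogous condition for $\tilde\xi$ is automatically satisfied; hence the existence of a symmetry of the pair is precisely the solvability of this equation by some $(u(x),v(y))\neq(0,0)$. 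I would then record the gauge freedom in $c$: the changes $x\mapsto \phi(x)$, $y\mapsto\psi(y)$ (which keep $p$ and hence the normal form) alter $c$ by an additive separable term $g(y) - h(x)$, so the genuine obstruction carried by $c$ is the mixed partial $c_{xy}$.

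The proof then bifurcates on $c_{xy}$. If $c_{xy}\equiv 0$ I gauge $c$ to $0$, reaching $\xi = \{dy = pdx\}$, $\tilde\xi = \{dy = f(p)dx\}$, which is case (1); here the displayed equation reduces to $u'(x) = v'(y) = \mathrm{const}$, recovering the three-dimensional algebra $\langle\del_x,\del_y,x\del_x+y\del_y\rangle$ of item III$_2$ of theorem \ref{thm:lnf}. If instead $c_{xy}\not\equiv 0$, then neither $u$ nor $v$ can vanish identically, since $u\equiv 0$ or $v\equiv 0$ would force $c$ separable through the displayed equation; so I may rectify the nowhere-zero symmetry field, which reduces $c$ to a function of the single transverse coordinate. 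Integrating the structure equations \eqref{eq:StrEqsRho} (now with $d\beta = 0$) against the two Ricatti solutions $\rho(I),\tilde\rho(I)$, exactly as in proposition \ref{prop:DepGenlnf}, then yields the symmetric Möbius normal form of case (2), with $\tilde\xi$ obtained from $\xi$ by the substitution $p\mapsto f(p)$.

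I expect the main obstacle to be this last step: producing the balanced presentation of case (2) in which $\xi$ and $\tilde\xi$ simultaneously take Möbius form with the same coefficients $a(y),b(y)$. After rectifying the symmetry one readily writes $\xi$ as an affine (degenerate-Möbius) family in a reparametrized $p$, but arranging that the residual $y$-dependence of $c$ is absorbed into genuine Möbius coefficients $a(y),b(y)$ for $\xi$ and for its $p\mapsto f(p)$ partner $\tilde\xi$ at once is exactly what forces the use of the two distinct integrable distributions furnished by $\rho$ and $\tilde\rho$. The symmetric choice of these two distributions is what pins the coordinates to the stated form rather than to some asymmetric equivalent, and verifying this simultaneous normalization is the delicate computational heart of the argument.
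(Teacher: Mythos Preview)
Your approach is correct and genuinely different from the paper's. You work directly in the coordinates of proposition~\ref{prop:DepGenlnf}, reducing the existence of a symmetry to the solvability of the scalar PDE $u(x)c_x + v(y)c_y = v'(y) - u'(x)$, and then bifurcate on $c_{xy}$. The paper instead prolongs to a principal $\R^\times$-bundle $B\to M$, builds a canonical coframe there with curvature invariant $\Lambda$ (and shows $\Lambda\equiv 0 \Leftrightarrow c_{xy}\equiv 0$), and for $\Lambda\ne 0$ descends to an invariant coframe on $M$ with secondary invariants $m,n$, extracting the Ricatti solutions $\rho = m_2/m_1$ and $\ell = n/m$ directly from these invariants to manufacture the two integrable distributions. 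Your route is shorter and more elementary; the paper's route is what produces the invariant-theoretic characterization \eqref{eq:Dep1SymCond} recorded in table~\ref{table:Summary}, and also yields along the way the lemma that $\Lambda\ne 0$ forces the symmetry algebra to be at most one-dimensional, which you do not address (nor need to, for this proposition alone).

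Two small points. First, your argument that ``neither $u$ nor $v$ can vanish'' only excludes identical vanishing; at the specific point $m$ one of them may still be zero, so the rectification to $\partial_X+\partial_Y$ is valid only on the open set where both are nonzero --- harmless here, but worth noting. Second, your description of the final step as ``integrating against the two Ricatti solutions $\rho(I),\tilde\rho(I)$'' is misleading: those solutions were already spent in reaching proposition~\ref{prop:DepGenlnf}. Once you have $c = c(X-Y)$, the M\"obius form of item~(2) comes from a direct change of base variables: with $s=X+Y$, $t=X-Y$ (so $Y=\partial_s$) one finds $\xi = \{dt = \tfrac{1 - pe^{c(t)}}{1 + pe^{c(t)}}\,d(2s)\}$ and likewise for $\tilde\xi$, which after $P=1/p$ is exactly $\{dy' = \tfrac{P + a(y')}{P + b(y')}dx'\}$ with $a=-e^c$, $b=e^c$, the partner $\tilde\xi$ obtained by $P\mapsto 1/f(1/P)$. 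So the ``delicate computational heart'' you anticipate is in fact a one-line substitution, not another Ricatti integration.
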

\begin{proof}
    We prolong in this case to consider the principal $\R^\times$ sub-bundle of the co-frame bundle on $M$
    \[ \R^\times \to B\to M \]
    consisting of co-frames $\alpha, \tilde\alpha, \beta = dI/I'$ on $M$, where the $\R^\times$ action is by simultaneous re-scalings $\alpha,\tilde\alpha \mapsto \lambda\alpha, \lambda\tilde\alpha$ on the balanced contact forms $\alpha, \tilde\alpha$. We first claim that we have on $B$ a canonical co-frame induced by the pair $(M,\xi, \tilde \xi)$ with $I'(I)\ne 0$. Let $\Theta, \tilde\Theta, \beta$ be the tautological 1-forms on $B$: $\Theta_b(v) = \alpha(\pi_*v), \tilde\Theta_b(v) = \tilde\alpha(\pi_*v)$, and $\beta = \pi^*\beta$ for $b = (\alpha, \tilde\alpha, \beta)\in B$ and $\pi:B\to M$. Then:
    \begin{lem}
         There is a unique connection 1-form $\nu$ on $B$ for which the co-frame $\Theta, \tilde\Theta, \beta, \nu$ satisfies the structure equations:
    \begin{equation}\label{eq:liftStrEq}
        d\Theta = \nu\wedge\Theta + \tilde\Theta\wedge \beta, ~~~d\tilde\Theta = \nu\wedge\Tilde\Theta + I\tilde\Theta\wedge\beta \pm \beta\wedge\Theta , ~~d\beta = 0, ~~d\nu = \Lambda \Theta\wedge\tilde\Theta,
    \end{equation}
    the coefficient $\Lambda:B\to \R$ being another invariant of $(M,\xi, \tilde \xi)$.
    \end{lem}
    \begin{proof}
        Indeed, for any choice of balanced contact forms with trivialization $B\cong M\times\R\ni (m, \lambda)$ through $\Theta = \lambda\alpha, \tilde\Theta = \lambda \tilde\alpha$, we take:
    \[ \nu = \frac{d\lambda}{\lambda} + h \alpha - \tilde h \tilde\alpha + g\beta,   \]
    where $\alpha, \tilde\alpha, \beta$ satisfy \eqref{eq:GenBalStrEq}.
    \end{proof}
    Now, having a canonical co-frame on the 4-dimensional $B$ with (at-least) one non-trivial invariant $I$, such a pair can admit at most a 3 dimensional symmetry algebra: only when $\Lambda(I)$ is dependent on $I$. It follows directly by differentiating \eqref{eq:liftStrEq} that:
    \[ d\Lambda = \Lambda_1 \Theta + \Lambda_2\tilde\Theta + I\Lambda\beta - 2\Lambda\nu \]
    so that $\Lambda$ is dependent on $I$ ( $I'(I)\beta = dI$ ) iff $\Lambda\equiv 0$. In case $\Lambda\equiv 0$, we may take a co-frame as in proposition~\ref{prop:DepGenlnf}, and directly compute that $\Lambda\equiv 0$ is equivalent to $\del_x\del_y c \equiv 0$, ie $c(x,y) = a(x) + b(y)$ is separable, and the normal form of proposition~\ref{prop:DepGenlnf} takes the form stated in item 1 above.

    Otherwise, when $\Lambda\ne 0$, we have two independent and non-trivial invariants, $I, \Lambda$, and (at most) a 2-dimensional symmetry algebra. As it turns out:
    \begin{lem}
        A pair $(M, \xi, \tilde\xi)$ with $I'(I)\ne 0$ and $\Lambda\ne 0$ admits at most a 1-dimensional symmetry algebra.
    \end{lem}
    \begin{proof}
        To realize a 2-dimensional symmetry algebra the additional invariants $\Lambda_1, \Lambda_2$ would need to be functionally dependent on $I, \Lambda$.
    One computes directly that:
    \[ d\Lambda_1 = \Lambda_{11}\Theta + \Lambda_{12}\tilde\Theta + (I\Lambda_1 \mp \Lambda_2) \beta - 3\Lambda_1\nu, ~~ d\Lambda_2 = \Lambda_{21}\Theta + \Lambda_{22} \tilde\Theta + (\Lambda_1 + 2I\Lambda_2)\beta - 3\Lambda_2\nu \]
where $\Lambda_{21} - \Lambda_{12} = 2 \Lambda^2$.
In particular, if $d\Lambda_1, d\Lambda_2 \in \langle dI, d\Lambda\rangle$ (and $\Lambda\ne 0$) then we find the contradiction:
\[ 2\Lambda^3 = \Lambda(\Lambda_{21} - \Lambda_{12}) = 3( \Lambda_1\Lambda_2 - \Lambda_1\Lambda_2) = 0. \]
    \end{proof}

     When $\Lambda\ne 0$, we may pass back down to $M$, where we have the co-frame:
\[ \theta = |\Lambda|^{1/2}\Theta, ~\tilde\theta = |\Lambda|^{1/2} \tilde\Theta, ~\beta = dI/I' \]
whose structure equations we compute as:
\begin{equation}\label{eq:DownEstr}
 d\theta = - \frac{n}{2}\theta\wedge\tilde\theta + \tilde\theta\wedge\beta + \frac{I}{2} \beta\wedge\theta, ~~d\tilde\theta = \frac{m}{2} \theta\wedge\tilde\theta + \frac{I}{2}\tilde\theta\wedge\beta \pm \beta \wedge \theta,~~
d\beta = 0
\end{equation}
for the invariants
\[ m = \Lambda_1/|\Lambda|^{3/2}, ~n = \Lambda_2/|\Lambda|^{3/2} \]
on $M$, having
\[ dm = m_1\theta + m_2\tilde\theta -  \left( \frac{Im}{2} \pm n\right)\beta, ~~ dn = n_1 \theta + n_2 \tilde\theta + \left( m + \frac{In}{2} \right)\beta,  \]
\[n_1 = m_2 + 2. \]
In general $I, m,n:M\to \R$ are independent, and such a pair with $\Lambda\ne 0$ admits no infinitesimal symmetries.  Such a pair admits an infinitesimal symmetry only when the differentials $\langle dm, dn, dm_1, dm_2, dI\rangle$ have rank two:
\begin{equation}\label{eq:Dep1SymCond}
    0 \equiv \begin{vmatrix}
    m_1 & m_2 \\ n_1 & n_2
\end{vmatrix},  ~~0 \equiv \begin{vmatrix}
    m_1 & m_2 \\ n_{21} & n_{22}
\end{vmatrix}, ~~0 \equiv \begin{vmatrix}
    m_1 & m_2 \\ m_{21} & m_{22}
\end{vmatrix}, ~~0 \equiv \begin{vmatrix}
    m_1 & m_2 \\ m_{11} & m_{12}
\end{vmatrix}.
\end{equation}
Supposing the condition \eqref{eq:Dep1SymCond} holds, we now proceed to determine the normal form of item 2, by seeking appropriate solutions of our Ricatti equation \eqref{eq:Ric}. Note that for any function $f:M\to \R$, writing $df = f_1\theta + f_2\tilde\theta + f'\beta$, then by differentiating and using \eqref{eq:DownEstr}, one obtains the relations:
\begin{equation}\label{eq:Dep1SymDerivs}
    (f_2)' - (f')_2 = f_1 + \frac{I}{2} f_2, ~~ (f')_1 - (f_1)'  =  \frac{I}{2}f_1 \pm f_2 , ~~ f_{21} - f_{12} = \frac{nf_1 - mf_2}{2}
\end{equation} 
for its higher derivatives.
From \eqref{eq:Dep1SymCond}, we set:
\[ \rho = m_2/m_1 = n_2/n_1, ~~\ell = n/m,  \]
and check from \eqref{eq:Dep1SymDerivs} that $\rho, \ell$ are solutions to \eqref{eq:Ric} and, for $d\rho = \rho_1\theta + \rho \rho_1\tilde\theta + \rho'\beta$, we have:
\[ \ell = \rho + 2\frac{\rho_1}{m}. \]
Note that we cannot have $\rho_1\equiv 0$ (ie $\ell\equiv\rho$) since differentiating $n = \rho m$, and using $n_1 = m_2 + 2$ one would then have:
\[ m_2 = \rho m_1 = n_1 = m_2 + 2, \]
which is impossible, so that $\rho_1\not\equiv 0$, and $\ell\not\equiv\rho$ are distinct solutions to \eqref{eq:Ric}. Then, using \eqref{eq:DownEstr} -- \eqref{eq:Dep1SymDerivs}, one computes:
\[ d\left( \frac{\theta + \rho\tilde\theta}{\rho_1} \right) = 0 , ~~~ d\left( m (\theta + \ell\tilde\theta) \right) = dy \wedge \left( m(\theta + \ell\tilde\theta) \right)  \]
so that for the integrable distributions $\theta + \rho\tilde\theta, \theta + \ell \tilde\theta$ we have integrating factors:
\[ \rho_1dy = \theta + \rho\tilde\theta, ~~ \frac{e^y}{m} dx = \theta + \ell\tilde\theta, \]
and the contact forms are given through:
\begin{equation}\label{eq:ContFormsDep}
    m(\ell - \rho) \theta = \ell m\rho_1 dy - \rho e^{y} dx, ~~m(\rho - \ell)\tilde\theta = m\rho_1dy - e^y dx.
\end{equation}
To give them the more explicit form of item 2 above, we note, from \eqref{eq:Dep1SymCond}, that all the invariants depend only on $y, I$, and for a function $f(y,I)$ we have:
\[ \del_yf = \rho_1f_1. \]
In particular, we compute that $(\rho_1\ell_1 m^2)' \equiv 0$, so that:
\[ m^2 \del_y\ell  = \rho_1\ell_1 m^2 = c(y) \]
is some function of $y$. Now, since $\rho, \ell$ are solutions to \eqref{eq:Ric}, they have the form:
\[ \frac{\ell - r_1}{\ell - r_2} = a(y) e^\lambda, ~~\frac{\rho - r_1}{\rho - r_2} = b(y) e^\lambda \]
where $r_j(I)$ are any pair of solutions to \eqref{eq:Ric} depending only on $I$ (and $\lambda(I) = \pm \int \frac{r_1 - r_2}{I'} dI$). Computing then the partials $\del_y\ell = c(y)/m^2, \del_y\rho = (\rho_1)^2$ and substituting into \eqref{eq:ContFormsDep} we arrive to the contact 1-forms:
\[ dY = \gamma(y) dy = \frac{r_1(I) - a(Y) e^{\lambda(I)}r_2(I) }{r_1(I) - b(Y) e^{\lambda(I)}r_2(I)} dx, ~~dY = \frac{1 - e^{\lambda(I)}a(Y)}{1 - e^{\lambda(I)}b(Y)}dx \]
for some function $\gamma(y)$,
which is in the stated form of item 2 above (set $P = \frac{r_1}{e^\lambda r_2}$ and $f(P) = e^{-\lambda}$).
\end{proof}

\begin{rem}[associated path geometry]\label{rem:DepPG}
    {\em Similarly to our observations in section \ref{rem:S0PG}, we have for a pair $(M, \xi, \tilde \xi)$ with $\mathcal{S}\ne 0$ and $I'(I)\ne 0$ an associated pair of path geometries on $M/A$ by projecting integral curves of $\xi\cap \ker (dI), \tilde \xi\cap \ker(dI)$ under $M\to M/A$. In the coordinates of proposition~\ref{prop:DepGenlnf} we see such integral curves project always to the {\em same} 2-parameter family of curves in $M/A$, given in these coordinates as graphs, $(x, y(x))$, of solutions to the 2nd order ode:
    \[ \frac{d^2y}{dx^2} = \left( c_x + c_y \frac{dy}{dx} \right)\frac{dy}{dx}. \]
    They are unparametrized geodesics of some affine connection on $M/A$, and we check (again with eq.~(5) from \cite{BGH}) that such a pair admits the maximal 3-dimensional symmetry algebra ($\del_x\del_y c \equiv 0$) iff this 2nd order ODE is {\em flat} (locally equivalent to $Y'' = 0$).}
\end{rem}

Next, we consider the general case when we have two independent invariants: $dI\wedge dI'\ne 0$. In this case we have further invariants:
\begin{dfn}\label{def:EStrInvs}
    {\em For $(M, \xi, \tilde \xi)$ with $\mathcal{S}\ne 0$ and $dI\wedge dI'\ne 0$, there is an induced function:
    \[ \mathcal{K}: M\to \mathbb{RP}^1, ~~\mathcal{K} = (k_1: k_2)\]
    by writing $dI' = k_1\alpha + k_2\tilde\alpha + \frac{I''}{I'}dI$ for some balanced contact forms. When say $k_1\ne 0$, we set $K:= k_2/k_1$. Moreover, we have then a co-frame $\alpha, \tilde\alpha, \beta = dI/I'$ determined on $M$, where the balanced contact forms are determined by the scaling condition:
    \[ dI' = \alpha + K\tilde\alpha + I''\beta, ~~~~(\beta = dI/I'). \]
    The structure equations of this co-frame are given in \eqref{eq:EstrEqs} below.}
\end{dfn}

So, in this situation we have a co-frame induced on $M$, as well as (at least) two non-trivial independent invariants: $I, I'$. There can be at most one non-trivial infinitesimal symmetry of such a pair. Its normal form, by proposition~\ref{prop:genLNF}, is then given by item IV of theorem \ref{thm:lnf}, and to finish, we determine the conditions on its additional invariants for when this is the case:

\begin{prop}\label{prop:GenSymm}
    A pair $(M, \xi, \tilde \xi)_\pm$ with $\mathcal{S}\ne 0$ and $dI\wedge dI'\ne 0$ admits an infinitesimal symmetry iff:
    \[ K' = 1 + IK \pm K^2, \]
    \[ K K_1 = K_2, ~~ K H_1 = H_2 \]
    where $F_j := X_j F, ~F' = XF$ are the derivatives along the frame $X_1, X_2, X$ dual to the co-frame $\alpha, \tilde\alpha, \beta$ of definition~\ref{def:EStrInvs}, and the invariant $H$ is given by the structure equations of this co-frame \eqref{eq:EstrEqs} below. 
\end{prop}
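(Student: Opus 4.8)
\emph{The plan} is to exploit the counting principle of Remark~\ref{rem:counting}. Definition~\ref{def:EStrInvs} produces a canonical co-frame $\alpha,\tilde\alpha,\beta=dI/I'$ on the $3$-manifold $M$ whose symmetries coincide with those of the pair (any diffeomorphism preserving $\xi=\ker\alpha,\tilde\xi=\ker\tilde\alpha$ preserves $I$, hence $\beta=dI/I'$ and the normalization $dI'=\alpha+K\tilde\alpha+I''\beta$, and conversely), and whose structure invariants include the two functionally independent functions $I,I'$ (recall $dI\wedge dI'\ne 0$). Hence by Remark~\ref{rem:counting} the local symmetry algebra has dimension at most $3-2=1$, and a one-dimensional symmetry exists precisely when the rank of the differentials of \emph{all} structure invariants and their successive derivatives is constantly equal to $2$ near $m$, i.e.\ when every structure invariant is a function of $I$ and $I'$ alone. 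The first step is to record the differential form of this last condition: writing $dG=G_1\alpha+G_2\tilde\alpha+G'\beta$ for the derivatives of a function $G$ along the frame $X_1,X_2,X$ dual to $\alpha,\tilde\alpha,\beta$, and using that $\langle dI,dI'\rangle=\langle\beta,\alpha+K\tilde\alpha\rangle$ by Definition~\ref{def:EStrInvs}, one sees that $G$ is a function of $(I,I')$ if and only if $G_2=KG_1$.

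\emph{For necessity}, suppose an infinitesimal symmetry $Y$ transverse to the axis exists. Since the co-frame and all the invariants $I,I',K,H$ are canonically attached to the pair, they are constant along $Y$; in particular $dI(Y)=dI'(Y)=0$, and as $dI=I'\beta$ this forces $Y$ to span the line $\ker dI\cap\ker dI'=\langle X_2-KX_1\rangle$. A direct pairing against the co-frame gives $A\oplus Y=\ker(\alpha+K\tilde\alpha)$. By the proposition asserting that $A\oplus Y$ is integrable for any symmetry transverse to the axis, this distribution is integrable; applying the Frobenius condition to $\alpha+K\tilde\alpha$ via \eqref{eq:GenBalStrEq} (the converse half of Proposition~\ref{prop:IntDist}) yields exactly the Ricatti relation $K'=1+IK\pm K^2$ of \eqref{eq:Ric}. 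Finally, evaluating $dK(Y)=0$ and $dH(Y)=0$ on $Y=X_2-KX_1$ gives $K_2=KK_1$ and $H_2=KH_1$, the remaining two conditions.

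\emph{For sufficiency}, assume the three displayed relations. The relations $K_2=KK_1$ and $H_2=KH_1$ say precisely that $K$ and $H$ are functions of $(I,I')$. It then remains to verify the hypothesis of Remark~\ref{rem:counting}, namely that every further derived invariant is again a function of $(I,I')$, so that the rank stays equal to $2$ near $m$ and a one-dimensional symmetry algebra is produced. I would establish this inductively: differentiating the structure equations \eqref{eq:EstrEqs} and using the resulting commutation identities for the frame $X_1,X_2,X$, one expresses the frame-derivatives of $K$ and $H$ in terms of $I,I',K,H$ and the Ricatti relation, and checks that the defining condition $G_2=KG_1$ propagates to each derived invariant. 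Equivalently, and perhaps more transparently, one may take $Y$ to be the suitably normalized vector field spanning $\ker(\alpha+K\tilde\alpha)\cap\ker\beta=\langle X_2-KX_1\rangle$ and verify directly by Cartan's formula that $\mathcal{L}_Y\alpha=\mathcal{L}_Y\tilde\alpha=\mathcal{L}_Y\beta=0$, the three displayed conditions being exactly what makes these Lie derivatives vanish.

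\emph{The main obstacle} is this sufficiency (closure) direction: showing that the three finite conditions force the entire tower of successively derived structure invariants to remain functions of $(I,I')$ -- equivalently, that $\mathcal{L}_Y$ annihilates the whole co-frame -- requires the explicit structure equations \eqref{eq:EstrEqs} and careful bookkeeping of the derived invariants. By contrast, the necessity direction is essentially forced by the geometry, the symmetry direction $\langle X_2-KX_1\rangle$ and the integrability (Ricatti) condition being dictated with no real computation.
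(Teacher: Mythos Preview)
Your plan matches the paper's: symmetries of the pair coincide with symmetries of the canonical co-frame of Definition~\ref{def:EStrInvs}, the two independent invariants $I,I'$ force $\dim\mathfrak g\le 1$ by Remark~\ref{rem:counting}, and one then characterizes when equality holds. Your necessity argument is the paper's verbatim: $Y\in\ker dI\cap\ker dI'=\langle KX_1-X_2\rangle$, integrability of $A\oplus Y=\ker(\alpha+K\tilde\alpha)$ yields the Ricatti relation, and $dK(Y)=dH(Y)=0$ give the remaining two conditions. (Your transversality assumption is harmless but unnecessary: $Y\sim KX_1-X_2$ already has no $X$-component.)

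Where you stop, the paper actually carries out your route~(a). From $d^2=0$ on \eqref{eq:EstrEqs} one extracts, for any $F$, the commutator identity $F_{21}-F_{12}+F_2H=F_1(KH+K_1)$ and two analogous identities involving $F'$. For $F$ satisfying $KF_1=F_2$, and using the Ricatti relation, these collapse to $F_{21}-F_{12}=K_1F_1$ and $(F')_2=K(F')_1$. With the additional hypothesis $K_2=KK_1$ one then checks in a couple of lines that $KF_1=F_2$ propagates to each of $F_1,F_2,F'$ (e.g.\ differentiate $KF_1=F_2$ by $X_1$ and use the first identity to get $KF_{11}=F_{12}$). Since all structure invariants are generated from $I$ and $H$, imposing $KH_1=H_2$ closes the tower. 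This is the ``careful bookkeeping'' you allude to; it is short once the commutator identities are written down.

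Your alternative route~(b) is viable but not a shortcut. With $Y=c(KX_1-X_2)$ one gets $\mathcal L_Y\beta=0$ for free, while $\mathcal L_Y\alpha=\mathcal L_Y\tilde\alpha=0$ turn out to be \emph{consistent} systems forcing $d\log c=H\,\alpha+KH\,\tilde\alpha-(I-G\pm K)\,\beta$; the existence of $c$ is then the closedness of this $1$-form, and verifying that closedness uses exactly $K_2=KK_1$ and $H_2=KH_1$ together with the structure equations. So the three displayed conditions do suffice, but approach~(b) does not avoid the computation, it repackages it.
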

\begin{proof}
    From \eqref{eq:GenBalStrEq}, and $\beta = dI/I'$, we have for the co-frame of definition~\ref{def:EStrInvs} the structure equations:
    \begin{equation}\label{eq:EstrEqs}
        d\alpha = \tilde H \alpha\wedge\tilde\alpha + \tilde\alpha\wedge\beta + G \beta\wedge\alpha , ~~~d\tilde\alpha = H \alpha\wedge\tilde\alpha + (I-G)\tilde\alpha\wedge\beta \pm \beta\wedge\alpha 
    \end{equation}
    \[ d\beta = - \frac{dI'}{I'}\wedge\beta = \frac{-K\tilde\alpha\wedge\beta + \beta\wedge\alpha}{I'} \]
    whose coefficients, upon differentiating $dI' = \alpha + K \tilde\alpha + I''\beta$, satisfy the relations:
    \[ \tilde H = - KH -K_1, ~~G = L_1 \mp  K - \frac{I''}{I'},  \]
    \[ KL_1 - L_2 = 1 + KI \pm  K^2 - K' \]
    for: $dI'' = L_1 \alpha + L_2 \tilde\alpha + I'''\beta$ and $dK = K_1 \alpha + K_2 \tilde\alpha + K'\beta$. The coefficient $H$ in these structure equations is an additional invariant, and we can consider as generating invariants:
    \[ H, I \]
    from which all further invariants are determined from successive derivatives of $H,I$ along the frame.
    
    Assuming such a pair admits an infinitesimal symmetry, say the vector field $Y$, it must preserve all these invariants (which must all be dependent upon $I, I'$). For $X_1, X_2, X$ the dual frame to $\alpha, \tilde\alpha, \beta$, then to preserve $I, I'$ we have the condition $Y\sim KX_1 - X_2$. The distribution spanned by the axis $A$ and $Y$ is an integrable distribution (since any symmetry $Y$ preserves the axis) and this integrable distribution is given by the kernel of $\alpha + K\tilde\alpha$, ie $K$ must be a solution to \eqref{eq:Ric}. To determine the complete conditions, note for a general function $F$ with $dF = F_1\alpha + F_2 \tilde\alpha + F'\beta$, we have
    \[ F_{21} - F_{12} + F_2H = F_1 (KH + K_1), \]
    \[ (F')_2 - (F_2)' + F_1 + \left(I \pm K + \frac{I''}{I'} - L_1\right)F_2 = K \frac{F'}{I'},\]
    \[ (F_1)' - (F')_1 + \left( L_1 \mp K - \frac{I''}{I'} \right)F_1 \pm  F_2 + \frac{F'}{I'} = 0. \]
    For this pair to admit a symmetry, we must ensure that all invariants (and their successive derivatives) satisfy $KF_1 = F_2$ (ie $YF = 0$). For such a function, the previous formulas for its higher derivatives simplify to:
    \begin{equation}\label{eq:SucDer}
        F_{21} - F_{12} = F_1K_1, ~~~ (F')_2 = K(F')_1
    \end{equation}
    Using that $K$ satisfies \eqref{eq:Ric}. Now, we claim that if additionally $K K_1 = K_2$, then for any function $F$ satisfying $YF = 0$, the same will hold for its higher derivatives. Indeed, by the second of \eqref{eq:SucDer}  we have that $F'$ satisfies $Y(F') = 0$. Differentiating $KF_1 = F_2$ by $X_1$, and using the first of \eqref{eq:SucDer}, we find:
    \[ K_1F_1 + KF_{11} = F_{21} = F_{12} + F_1K_1 \so KF_{11} = F_{12} \]
    and so $F_1$ satisfies as well $YF_1 = 0$. Now we find that so too does $YF_2 = 0$, since:
   \[ F_{22} = K_2F_1 + K F_{12} = KK_1 F_1 + K F_{12} = K ( K_1F_1 + F_{12}) = KF_{21} \]
   from $F_{21} = (KF_1)_1 = K_1 F_1 + K F_{11} = K_1 F_1 + F_{12}$. Since all the invariants are generated from $H, I$ (and $I$ already satisfies $YI= 0$), imposing as well that $YH = 0$, ie $KH_1 = H_2$, along with $K$ satisfying \eqref{eq:Ric} and $KK_1 = K_2$, then all invariants, $F$ (derived from $H,I$), satisfy $YF = 0$. In particular, all invariants are then functionally dependent upon $I, I'$ as needed, and such a pair (see remark \ref{rem:counting}) admits exactly one infinitesimal symmetry.
\end{proof}

Writing the appropriate structure equations in terms of their dual frames, similarly to proposition \ref{rem:S0Sum}), we find the following formulas for these invariants:

\begin{prop}[Explicit formulas]\label{rem:GenSum}  In local coordinates with $\xi = \{ dy = pdx\}, \tilde \xi = \{ dy = f(x,y,p)dx\}$, we have given above explicit formulas for $I, X$ (proposition \ref{rem:explFormulasGen}) in these coordinates, balanced contact forms, $\alpha, \tilde\alpha$ eq.~\eqref{eq:BalCoords}, and dual vector fields $\mathcal{D}, \mathcal{\tilde D}$, eq.~\eqref{eq:BalCoordFrame}. 
In case $I'\ne 0$ and $dI\wedge dI'\equiv 0$, set
\[ Y = \mathcal{D}- \frac{\mathcal{D}\cdot I}{I'} X, ~~\tilde Y = \mathcal{\tilde D} - \frac{\mathcal{\tilde D}\cdot I}{I'}X\]
\[ h = - \tilde\alpha ([Y, \tilde Y]), ~~\tilde h = - \alpha([Y, \tilde Y]), ~~\lambda = \sqrt{|Y\tilde h + \tilde Yh|}. \]
The invariant $\Lambda$ from the proof of proposition \ref{prop:SymmsDepGen} being a non-zero multiple of $\lambda$. Such a pair admits a 3-dimensional symmetry algebra (item 1 of proposition~\ref{prop:SymmsDepGen}) iff $\lambda \equiv 0$. When $\lambda\ne 0$, we have additional invariants $m,n$ given by:
\[ m = -2\lambda \tilde\alpha([Y_1, Y_2]), ~~n = 2\lambda\alpha([Y_1, Y_2]) \]
for $Y_1 = Y/\lambda, Y_2 = \tilde Y/\lambda$. The conditions to admit (exactly one) symmetry being given above \eqref{eq:Dep1SymCond}  (with $f_j = Y_jf$). In case $I'\ne 0$ and $dI\wedge dI'\ne 0$, set:
    \[ X_1 = \frac{I' \mathcal{D} - (\mathcal{D}\cdot I) X}{I'\mathcal{D}\cdot I' - I'' \mathcal{D}\cdot I}, ~~X_2 = \frac{I' \mathcal{\tilde D} - (\mathcal{\tilde D}\cdot I) X}{I' \mathcal{D}\cdot I' - I'' \mathcal{D}\cdot I}. \]
    The invariants $K,H$ are then:   
    \[ K = \frac{I'\mathcal{\tilde D}\cdot I' - I'' \mathcal{\tilde D}\cdot I}{I'\mathcal{D}\cdot I' - I'' \mathcal{D}\cdot I}, ~~~ H = - (\mathcal{D}\cdot I' - \frac{I''}{I'}\mathcal{D}\cdot I) \tilde\alpha ( [X_1, X_2]). \]
    The conditions to admit (exactly one) symmetry being given above in proposition~\ref{prop:GenSymm}  (with $F_j = X_jF$).
\end{prop}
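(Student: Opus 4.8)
The plan is to follow the template of Proposition~\ref{rem:S0Sum}: in each case produce the frame dual to the appropriately normalized balanced co-frame, and then read off every invariant as a structure coefficient via the identity $d\omega^i(e_j,e_k) = -\,\omega^i([e_j,e_k])$, valid for any co-frame $\omega^i$ dual to a frame $e_j$ since the pairings $\omega^i(e_j)$ are constant. Thus the coefficient of $\omega^a\wedge\omega^b$ in $d\omega^i$ is $-\omega^i([e_a,e_b])$, which is exactly what converts the abstract structure equations \eqref{eq:GenBalStrEq}, \eqref{eq:EstrEqs}, \eqref{eq:DownEstr} into the displayed bracket formulas.

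First I would verify the pairing: the vector fields $\mathcal{D}, \mathcal{\tilde D}$ of \eqref{eq:BalCoordFrame} together with $X$ of Proposition~\ref{rem:explFormulasGen} are dual to the balanced co-frame $\alpha, \tilde\alpha, ds$ of \eqref{eq:BalCoords}, a one-line check. Since $X\cdot I = I'$ and $X\cdot I' = I''$ by the definition of the derivative along the axis, expanding $dI, dI'$ in this co-frame gives $dI = (\mathcal{D}\cdot I)\alpha + (\mathcal{\tilde D}\cdot I)\tilde\alpha + I'\,ds$ and the analogue for $dI'$; in particular $\beta = dI/I'$ has $\beta(\mathcal{D}) = (\mathcal{D}\cdot I)/I'$ and $\beta(X) = 1$. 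This identifies the component functions in all the stated formulas with the directional derivatives $\mathcal{D}\cdot I, \mathcal{\tilde D}\cdot I, \mathcal{D}\cdot I', \mathcal{\tilde D}\cdot I'$.

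For the generic case $dI\wedge dI'\ne 0$ I would pin down the normalization of Definition~\ref{def:EStrInvs}. Substituting $ds = \beta - \frac{\mathcal{D}\cdot I}{I'}\alpha - \frac{\mathcal{\tilde D}\cdot I}{I'}\tilde\alpha$ into the expansion of $dI'$ gives its $\alpha,\tilde\alpha$ coefficients $k_1 = \mathcal{D}\cdot I' - \frac{I''}{I'}\mathcal{D}\cdot I$ and $k_2 = \mathcal{\tilde D}\cdot I' - \frac{I''}{I'}\mathcal{\tilde D}\cdot I$, so $K = k_2/k_1$ as claimed. The co-frame normalized by $dI' = \alpha + K\tilde\alpha + I''\beta$ is then $k_1\alpha, k_1\tilde\alpha, \beta$, whose dual frame is $X_1 = \frac{1}{k_1}\bigl(\mathcal{D} - \frac{\mathcal{D}\cdot I}{I'}X\bigr)$ and the analogous $X_2$ — precisely the displayed expressions once $k_1, k_2$ are cleared. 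Reading $H$ as the coefficient of $\alpha\wedge\tilde\alpha$ in $d\tilde\alpha$ of \eqref{eq:EstrEqs} (for the normalized forms), the bracket identity gives $H = -k_1\,\tilde\alpha([X_1,X_2])$, the stated formula, and the symmetry conditions are those of Proposition~\ref{prop:GenSymm} with $F_j = X_jF$.

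In the dependent case $dI\wedge dI'\equiv 0$ the form $\beta = dI/I'$ is closed and $Y, \tilde Y, X$ are dual to the unscaled balanced co-frame $\alpha, \tilde\alpha, \beta$; thus the coefficients $h, \tilde h$ of $\alpha\wedge\tilde\alpha$ in $d\tilde\alpha, d\alpha$ from \eqref{eq:GenBalStrEq} are $-\tilde\alpha([Y,\tilde Y])$ and $-\alpha([Y,\tilde Y])$, as stated. I expect the main obstacle to be identifying the prolongation invariant $\Lambda$: differentiating the connection form $\nu = \frac{d\lambda}{\lambda} + h\alpha - \tilde h\tilde\alpha + g\beta$ from the proof of Proposition~\ref{prop:SymmsDepGen} and collecting the $\Theta\wedge\tilde\Theta$ coefficient of $d\nu$ using \eqref{eq:GenBalStrEq} and $d\beta = 0$, one must check (on the unit section, where $\frac{d\lambda}{\lambda}$ restricts to zero) that this coefficient is a non-zero multiple of $Y\tilde h + \tilde Y h$; a direct tally of the $\alpha\wedge\tilde\alpha$ terms in $dh\wedge\alpha + h\,d\alpha - d\tilde h\wedge\tilde\alpha - \tilde h\,d\tilde\alpha$ should give exactly $-(Y\tilde h + \tilde Y h)$. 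This justifies taking $\lambda = \sqrt{|Y\tilde h + \tilde Y h|} = |\Lambda|^{1/2}$ as the scaling that descends the structure equations to \eqref{eq:DownEstr} on $M$. With this the rest is mechanical: for the rescaled frame $Y_1 = Y/\lambda, Y_2 = \tilde Y/\lambda$ dual to $\lambda\alpha, \lambda\tilde\alpha, \beta$, the bracket identity applied to \eqref{eq:DownEstr} yields $m = -2\lambda\,\tilde\alpha([Y_1,Y_2])$ and $n = 2\lambda\,\alpha([Y_1,Y_2])$, and the conditions \eqref{eq:Dep1SymCond} for admitting a symmetry are expressed through $f_j = Y_jf$, as stated.
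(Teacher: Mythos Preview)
Your proposal is correct and follows exactly the approach the paper indicates: the paper's own proof is the single sentence ``Writing the appropriate structure equations in terms of their dual frames, similarly to proposition~\ref{rem:S0Sum}, we find the following formulas for these invariants,'' and you have faithfully filled in those dualizations, including the one nontrivial step of extracting $\Lambda$ as (minus) the $\alpha\wedge\tilde\alpha$ coefficient of $d\nu$ on the unit section to justify $\lambda = |\Lambda|^{1/2}$.
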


\subsection{proof of theorem \ref{thm:lnf}} \label{sec:prf} 

\begin{table}[h]
    \centering
    \begin{tabular}{c|c|c}
    \hline\hline
  I$_1$   & $\mathcal{S}\equiv 0, dI \equiv 0$   & $(M, D, \tilde D)_+$, $I^2 = cst. > 4$, or $(M, D, \tilde D)_-$, $I = cst.$ \\
  \hline 
  I$_2$  & $\mathcal{S}\equiv 0, dI \equiv 0$   & $(M, D, \tilde D)_+$, $I^2 = cst. \le 4$. \\
    \hline\hline 
   II$_1$ & $\mathcal{S}\equiv 0, dI\ne 0$  & $J'\ne 0, G\equiv 0$.   \\
   \hline 
   II$_2$ & $\mathcal{S}\equiv 0, dI\ne 0$  & $J'\equiv 0$   \\
   \hline\hline 
   III$_1$  &  $\mathcal{S}\ne 0, dI\wedge dI'\equiv 0$  &  $\Lambda \ne 0, ~\frac{m_1}{m_2} = \frac{n_1}{n_2} = \frac{m_{11}}{m_{12}} = \frac{m_{21}}{m_{22}} = \frac{n_{21}}{n_{22}}$  \\
   \hline 
   III$_2$  &  $\mathcal{S}\ne 0, dI\wedge dI'\equiv 0$  &  $\Lambda \equiv 0$    \\
     \hline\hline 
    IV &  $\mathcal{S}\ne 0, dI\wedge dI'\ne 0$  &  $K' = 1 + IK \pm K^2, K_2 = KK_1, H_2 = KH_1$.   \\
    \hline\hline 
    \end{tabular}
    \caption{The characterization of the normal forms listed in theorem \ref{thm:lnf} in terms of their invariants (see corollary \ref{rem:Schwartz}, definitions~\ref{def:BalGenInv}, \ref{def:Jinv}, \ref{def:EStrInvs} and propositions \ref{rem:explFormulasGen}, \ref{rem:S0Sum} and \ref{rem:GenSum} above). }
    \label{table:Summary}
\end{table}

The proof of the main theorem \ref{thm:lnf} follows upon gathering the results of \S \ref{sec:Comps} as follows (see table \ref{table:Summary} for a summary):
\begin{proof}(theorem \ref{thm:lnf}) If a pair $(M, \xi, \tilde \xi)$ admits an infinitesimal symmetry (transverse to its axis), then by proposition~\ref{prop:genLNF}, it has the general local normal form of item IV.  A typical pair $(M, \xi, \tilde \xi)_\pm$ has $\mathcal{S}\ne 0$ and $dI\wedge dI'\ne 0$, and so by proposition~\ref{prop:GenSymm}, it admits (exactly one) infinitesimal symmetry iff its invariants $K,H$ satisfy the conditions stated in the last row of table \ref{table:Summary}. In case a pair with $\mathcal{S}\ne 0$ has $dI\wedge dI'\equiv 0$, then by the proof of proposition~\ref{prop:SymmsDepGen}, it admits a symmetry iff either its invariant $\Lambda\equiv 0$, and it has local normal form of III$_2$ (admitting exactly a three dimensional infinitesimal symmetry algebra), or $\Lambda\ne 0$, and its additional invariants $m,n$ satisfy \eqref{eq:Dep1SymCond} (the conditions stated in the 5th row of table \ref{table:Summary}) having the local normal form of III$_1$ and admitting exactly one infinitesimal symmetry. In case the pair $(M, \xi, \tilde \xi)$ has $\mathcal{S}\equiv 0$ and $I\equiv cst.$ its normal form is given by proposition~\ref{prop:Icst}, which can be split into type I$_1$:  consisting of $(M, \xi, \tilde \xi)_+$ with $I^2 >4$ and $(M, \xi, \tilde \xi)_-$ with $I = cst.$ (the types with hyperbolic Ricatti equation \eqref{eq:Ric}), or the type I$_2$: $(M, \xi, \tilde \xi)_+$ with $I^2 \le 4$ (the types with an elliptic or parabolic Ricatti equation \eqref{eq:Ric}). In case we have $\mathcal{S}\equiv 0$ and $dI\ne 0$, then by proposition~\ref{prop:S0Gen}, such a pair admits a symmetry iff either its invariant $J$ has $J'\equiv 0$ and we have the normal form of II$_2$, or when $J'\ne 0$ then its additional invariant $G$ must vanish identically, and we have the normal form of type II$_1$ (admitting exactly one infinitesimal symmetry). One may determine the symmetry algebras stated in theorem \ref{thm:lnf} from \eqref{eq:symm}.
\end{proof}

\section*{Acknowledgments}

I would like to thank Francesco Ruscelli, Agustin Moreno, Surena Hozoori, Federico Salmoiraghi, Gil Bor, and Taylor Klotz for their interest and helpful discussions, as well as the anonymous referee for their helpful comments on references and relevant examples of projective Anosov flows. I appreciate the support of the Deutsche Forschungsgemeinschaft (DFG, German Research Foundation) – Project-ID 281071066 – TRR 191.

\end{document}